\theoremstyle{plain}
\newtheorem{theorem}{Theorem}[section]
\newtheorem{proposition}[theorem]{Proposition}
 \newtheorem{lemma}[theorem]{Lemma}
\theoremstyle{definition}
\newtheorem{remark}[theorem]{Remark}
\newtheorem{conjecture}[theorem]{Conjecture}
\newtheorem{assumptions}[theorem]{Assumption}
 \newtheorem{definition}[theorem]{Definition}
\newtheorem*{proposition*}{Proposition}
\newtheorem*{definition*}{Definition}
\numberwithin{equation}{section}
\theoremstyle{plain}
\newtheorem*{theorem*}{Theorem}
\newenvironment{abc}{\begin{enumerate}[{\rm (a)}]}{\end{enumerate}}
\def\dom{\mathrm{D}}
\def\dd{\mathrm{d}}
\def\Id{I} 
\def\RR{\mathbb{R}}
\def\NN{\mathbb{N}} 
\def\LLL{\mathscr{L}}
\def\D{\mathrm{D}}
\def\A{\mathcal{A}}
\def\L{\mathcal{L}}
\def\U{\mathcal{U}}
\def\D{\mathcal{D}}
\def\B{\mathcal{B}}
\def\Z{\mathcal{Z}}
\def\V{\mathcal{V}}
\def\Ell{\mathrm{L}}
\def\ddd#1{\mathop{\frac{\partial}{\partial{#1}}}}
\newcommand{\norm}[1]{\|#1\|}
\newcommand{\from}{\colon}
\begin{document}
\title{Perturbations of non-autonomous second-order abstract Cauchy problems} 

\author{Christian Budde}
\address{University of the Free State, Department of Mathematics and Applied Mathematics, P.O. Box 339, 9300 Bloemfontein, South Africa}
\email{buddecj@ufs.ac.za}

\author{Christian Seifert}
\address{Technische Universit\"at Hamburg, Institut f\"ur Mathematik, Am Schwarzenberg-Campus 3, 21073 Hamburg, Germany}
\email{christian.seifert@tuhh.de}

\begin{abstract}                                                                         
In this paper we present time-dependent perturbations of second-order non-au\-tonomous abstract Cauchy problems associated to a family of operators with constant domain. We make use of the equivalence to a first-order non-autonomous abstract Cauchy problem in a product space, which we elaborate in full detail. 
As an application we provide a perturbed non-autonomous wave equation.
\end{abstract}

\thanks{Declarations of interest: none.}
\keywords{non-autonomous, second-order, abstract Cauchy problem, perturbations}
\subjclass[2020]{47D09, 47Dxx, 34G10, 47A55, 35L05}

\maketitle

\section{Introduction}
Autonomous second-order abstract Cauchy problems, which are of the form
\begin{align}\label{eqn:IntroAut}\tag{ACP$_2$}
\begin{cases}
\ddot{u}(t) =Au(t),&\quad t>0,\\
u(0) =x,\\
\dot{u}(0) =y,
\end{cases}
\end{align}
for some (unbounded) operator $(A,\dom(A))$, which often occur in the context of wave equations, have been studied intensively by several authors in the past, e.g., Sova \cite{S1966}, Da Prato and Giusti \cite{DPG1967}, Fattorini \cite{F1985}, Neubrander \cite{N1986}, Xio and Jin \cite{XJ1990} as well as Xiao and Liang \cite{XL2003}. One can also find more information in the monographs by Arendt et al. \cite[Sect.~3.14 \& 3.15]{ABHN2011}, Melnikova and Filinkov \cite[Sect.~1.7]{MF2001} or Vasil'ev and Piskarev \cite{VP2004}. In contrast to the first-order problem, where (classical) solutions are given by $C_0$-semigroups, one needs another solution concept for \eqref{eqn:IntroAut}, the so-called cosine and sine families. Similar to the Hille--Yosida generation theorem for strongly continuous semigroups, one can also characterize generators of cosine families, cf. \cite[Thm.~3.15.3]{ABHN2011}, \cite[Thm.~1.7.2]{MF2001} or \cite[Thm.~A]{XJ1990}. The classical operator theoretical approach to \eqref{eqn:IntroAut} is to reduce these to first-order ones, where one can apply the theory of $C_0$-semigroups. For a detailed overview on $C_0$-semigroups, we refer for example to the monographs by Engel and Nagel \cite{EN}, Goldstein \cite{G2017} or Pazy \cite{P1983}. In \cite{Ki1972}, Kisy\'{n}ski gives an explicit correspondence between generators of cosine families and strongly continuous semigroups, see also \cite[Thm.~3.14.11]{ABHN2011}. 

\medskip
In contrast to the autonomous second-order problems, one has the non-autonomous second-order abstract Cauchy problems of the form
\begin{align}\label{eqn:IntroNonAut}\tag{nACP$_2$}
\begin{cases}
\ddot{u}(t)=A(t)u(t),&\quad t\in\left(0,T\right],\\
u(0)=x,\\
\dot{u}(0)=y,
\end{cases}
\end{align}
for some fixed $T>0$, where $(A(t),\dom(A(t)))_{t\in\left[0,T\right]}$ is a family of operators. These non-autonomous second-order abstract Cauchy problems have been studied first by Kozak \cite{K1994,K1995,K1995Fund} and later on by Bochneck \cite{B1997}, Winiarska \cite{W2009,W2005} and Lan \cite{L2001}, just to mention a few. The same idea as for \eqref{eqn:IntroAut} helps to reduce \eqref{eqn:IntroNonAut} again to a first-order problem. Solutions of non-autonomous first-order abstract Cauchy problems have been studied exhaustively by means of evolution families for example by Acquistapace and Terreni \cite{Acquistapace1987} and Kato and Tanabe \cite{Tanabe1960,Kato1961,tanabeBook}. A semigroup approach by so-called evolution semigroups was firstly introduced by Howland \cite{H1974} and later on studied by several authors, e.g., Evans \cite{E1976}, Nagel \cite{Na1995}, Nickel \cite{N1997}, Rhandi \cite{NR1995} and Schnaubelt \cite{RS1996}.

\medskip
The goal of this paper is to establish a perturbation result for \eqref{eqn:IntroNonAut}. Perturbation theorems for cosine families associated to \eqref{eqn:IntroAut} have been developed for example by Piskarev and Shaw \cite{PS1995,PS1997}, Miyadera \cite{SM1978}, Takenaka and Okazawa \cite{TO1978} as well as Travis and Webb \cite{TW1981}. Also time-dependent perturbations have been studied, cf. \cite{SW1986,L1998,Lu1981}. We want to perturb \eqref{eqn:IntroNonAut} in a time-dependent way as it has been done for first-order non-autonomous problems by R\"abinger et al. \cite{RRS1996,RSRV2000}. Especially, we want to cover time-dependent perturbations of bounded type.

\medskip

The paper is structured as follows. Section \ref{sec:preliminaries} consists of preliminary definitions regarding solutions of non-autonomous abstract Cauchy problems. 
The following Section \ref{sec:generation} provides a relation between existence of fundamental solutions to first- and second-order non-autonomous Cauchy problems.
This can be viewed as a non-autonomous version of the generation theorem of Kisy\'{n}ski in \cite{Ki1972}, cf.\ Theorem \ref{thm:Gen}. Section \ref{sec:BoundPert} provides our main result regarding perturbations of second-order non-autonomous Cauchy problems, cf.\ Theorem \ref{thm:bounded_perturbation}. The final Section \ref{sec:example} provides an example of a non-autonomous wave equation.

\section{Preliminaries on Non-Autonomous Abstract Cauchy Problems}
\label{sec:preliminaries}

Let $X$ be a Banach space, $T>0$ and $(A(t),\dom(A(t)))_{t\in\left[0,T\right]}$ be a family of closed operators on $X$.

We make the following crucial assumption throughout this article.

\begin{assumptions}\label{assump:ConstDom}
The domains $\dom(A(t))$ do not depend on time, i.e., there exists $D\subseteq X$ such that $\dom(A(t)) = D$ for all $t\in [0,T]$.
\end{assumptions}

We write $\Delta:=\Delta_T:=\{(t,s)\in\left[0,T\right]^2:\ t\geq s\}$.

\subsection{Non-Autonomous First-Order Cauchy Problems}

Non-autonomous first-order abstract Cauchy problems are of the form
\begin{align}\tag{nACP}\label{eqn:nACP}
\begin{cases}
\dot{v}(t)=A(t)v(t),&\quad t\in\left(0,T\right],\\
v(0)=x,
\end{cases}
\end{align}
where $x\in X$.

\begin{definition}\label{def:SolnACP}
A function $v\from\left[0,T\right]\to X$ is called a \emph{(classical) solution} to \eqref{eqn:nACP} if $v$ is continuously differentiable, $v(t)\in D$ for all $t\in\left(0,T\right]$ and $v$ satisfies \eqref{eqn:nACP}.
\end{definition}

The following solution concept is important for non-autonomous first-order problems. 

\begin{definition}\label{def:FundSolnACP}
A \emph{fundamental solution} to \eqref{eqn:nACP} associated with $(A(t),\dom(A(t)))_{t\in\left[0,T\right]}$ satisfying Assumption \ref{assump:ConstDom} is a family of bounded linear operators $(U(t,s))_{(t,s)\in\Delta}$ on a Banach space $X$ satisfying the following conditions:
\begin{itemize}
	\item[\textbf{(U1)}]\label{item:U1} $U(t,t)=\Id$ and $U(t,s)U(s,r)=U(t,r)$ for all $(t,s),(s,r)\in\Delta$.
	\item[\textbf{(U2)}]\label{item:U2} The mapping $\Delta\ni(t,s)\mapsto U(t,s)$ is strongly continuous on $X$.
	\item[\textbf{(U3)}]\label{item:U3} $U(t,s)D\subseteq D$
	\item[\textbf{(U4)}]\label{item:U4} For all $(t,s)\in\Delta$ and $x\in D$ one has that $\frac{\partial}{\partial{t}}U(t,s)x$ and $\frac{\partial}{\partial{s}}U(t,s)x$ exist and
	\begin{align}\label{eqn:DiffEvoFam}
	\frac{\partial}{\partial{t}}U(t,s)x=A(t)U(t,s)x \text{ and } \frac{\partial}{\partial{s}}U(t,s)x=-U(t,s)A(s)x.
	\end{align}
\end{itemize}
\end{definition} 
Note that \textbf{(U4)} requires \textbf{(U3)}.

\begin{remark}
\begin{abc}
  \item Observe that a fundamental solution to \eqref{eqn:nACP} is bounded in $\L(X)$, which follows from \textbf{(U2)}, compactness of $\Delta$ and the uniform boundedness principle.
	\item Fundamental solutions of \eqref{eqn:nACP} as defined in Definition \ref{def:FundSolnACP} are also often called evolution families.
\end{abc}	
\end{remark}

\begin{proposition}
Let $(U(t,s))_{(t,s)\in\Delta}$ be a fundamental solution of \eqref{eqn:nACP}. If $x\in D$ then $v(t):=U(t,0)x$ defines a classical solution of \eqref{eqn:nACP}.
\end{proposition}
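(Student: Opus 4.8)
The plan is to verify the three defining properties of a classical solution from Definition~\ref{def:SolnACP} directly, using the properties \textbf{(U1)}--\textbf{(U4)} of the fundamental solution. Set $v(t) := U(t,0)x$ for $x \in D$. First I would check that $v(t) \in D$ for all $t \in [0,T]$: since $x \in D$, property \textbf{(U3)} gives $U(t,0)x \in U(t,0)D \subseteq D$, so in fact $v(t) \in D$ for all $t$, not merely $t \in (0,T]$. Next, the initial condition: $v(0) = U(0,0)x = \Id x = x$ by the first part of \textbf{(U1)}.

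For differentiability and the differential equation, I would invoke \textbf{(U4)} with $s = 0$: for $x \in D$ the partial derivative $\frac{\partial}{\partial t} U(t,0)x$ exists and equals $A(t)U(t,0)x = A(t)v(t)$. Hence $v$ is differentiable on $[0,T]$ with $\dot{v}(t) = A(t)v(t)$, which is exactly the equation in \eqref{eqn:nACP}. It remains to argue that $v$ is \emph{continuously} differentiable. Here $\dot{v}(t) = A(t)U(t,0)x$, and continuity of this map in $t$ should follow from strong continuity \textbf{(U2)} of the fundamental solution together with the joint continuity of $(t,s)\mapsto A(t)U(t,s)x$ that is implicit in the setup; one typically knows that $\Delta \ni (t,s) \mapsto A(t)U(t,s)x$ is continuous for $x \in D$ (indeed $\frac{\partial}{\partial t}U(t,s)x$, being a derivative of the continuous-in-$s$ family, depends continuously on the parameters). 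Concretely I would write $A(t)v(t) = \frac{\partial}{\partial t}U(t,s)x\big|_{s=0}$ and note that this coincides, by the chain-rule-type identity following from \textbf{(U1)} and \textbf{(U4)}, with something whose continuity in $t$ is transparent; alternatively one appeals to the standard fact that a fundamental solution in the sense of Definition~\ref{def:FundSolnACP} automatically has $(t,s)\mapsto A(t)U(t,s)x$ continuous on $\Delta$ for $x\in D$.

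The main obstacle is precisely this last point — establishing continuity of $t \mapsto A(t)v(t)$, i.e.\ that $v \in C^1$ rather than merely differentiable. The existence of $\frac{\partial}{\partial t}U(t,s)x$ from \textbf{(U4)} does not by itself guarantee continuity of that derivative; one needs either an additional regularity input (continuity of $t\mapsto A(t)x$ on $D$ in an appropriate sense, or joint continuity of the derivative map on $\Delta$) or a bootstrap argument. In the autonomous case this is automatic because $A(t)U(t,0)x = U(t,0)Ax$ is manifestly continuous by \textbf{(U2)}; in the non-autonomous case I expect the paper either to have folded the needed continuity into the definition or the preliminaries, or to derive it from \textbf{(U4)} by writing $A(t)U(t,s)x = -\frac{\partial}{\partial s}\big(U(t,s)x\big)\big|$-type manipulations combined with \textbf{(U1)}. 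Modulo that continuity statement, the proposition is a routine unwinding of the definitions, and I would present it as such: state $v(t)=U(t,0)x$, check $v(0)=x$ via \textbf{(U1)}, check $v(t)\in D$ via \textbf{(U3)}, and check $\dot v(t)=A(t)v(t)$ with the required continuity via \textbf{(U4)} (and \textbf{(U2)}).
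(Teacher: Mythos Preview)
Your approach is exactly the paper's: the paper's proof is a single sentence stating that the result ``follows easily from the properties \textbf{(U1)}--\textbf{(U4)}'', and your unwinding of $v(0)=x$ from \textbf{(U1)}, $v(t)\in D$ from \textbf{(U3)}, and $\dot v(t)=A(t)v(t)$ from \textbf{(U4)} is precisely what that sentence encodes. You are in fact more scrupulous than the paper about the $C^1$ issue --- the paper does not comment on continuity of $t\mapsto A(t)U(t,0)x$ at all and simply asserts the result.
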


\begin{proof}
    The statement follows easily from the properties \textbf{(U1)}--\textbf{(U4)}.
\end{proof}

\subsection{Non-Autonomous Second-Order Cauchy Problems}

Non-autonomous second-order abstract Cauchy problems are of the form
\begin{align}\label{eqn:nACP2}\tag{nACP$_2$}
\begin{cases}
\ddot{u}(t)=A(t)u(t),&\quad t\in\left(0,T\right],\\
u(0)=x,\\
\dot{u}(0)=y,
\end{cases}
\end{align}
where $x,y\in X$.

\begin{definition}\label{def:SolnACP2}
A function $u\from \left[0,T\right]\to X$ is called a \emph{(classical) solution} to \eqref{eqn:nACP2} if $u$ is twice continuously differentiable, $u(t)\in D$ for all $t\in\left(0,T\right]$ and $u$ satisfies \eqref{eqn:nACP2}.
\end{definition}

The solution concept of \eqref{eqn:nACP2} is vastly more involved than this of \eqref{eqn:nACP}. In fact, it goes back to Kozak \cite[Def.~2.1, Def.~3.1]{K1995Fund}. Note that our definition is slightly different from the one of Kozak, see also Remark \ref{rem:different_definitions} below.

\begin{definition}\label{def:FundSolnACP2}
A \emph{fundamental solution} to \eqref{eqn:nACP2} associated with $(A(t),\dom(A(t)))_{t\in\left[0,T\right]}$ satisfying Assumption \ref{assump:ConstDom} is a family of bounded linear operators $(S(t,s))_{(t,s)\in\Delta}$ on $X$ satisfying the following conditions: 
\begin{itemize}
	\item[\textbf{(S1)}]\label{item:S1} 
	\begin{abc}
		\item $S(t,t)=0$ for all $t\in\left[0,T\right]$.
		\item The mapping $\Delta\ni (t,s)\mapsto S(t,s)$ in strongly continuous on $X$.
		\item 
		For all $x\in X$ and $s\in [0,T]$ the mapping $[s,T]\ni t\mapsto S(t,s)x$ is continuously differentiable, and $(t,s)\mapsto \ddd{t}S(t,s)x$ is continuous with
		\[\left.\frac{\partial}{\partial{t}}S(t,s)x \right|_{t=s} =x.\]
		\item 
		For all $x\in D$ and $t\in [0,T]$ the mapping $[0,t]\ni s\mapsto S(t,s)x$ is continuously differentiable, and $(t,s)\mapsto \ddd{s}S(t,s)x$ is continuous with
		\[\left.\frac{\partial}{\partial{s}}S(t,s)x \right|_{t=s}=-x.\]
	\end{abc}
	\item[\textbf{(S2)}]\label{item:S2} $S(t,s)D\subseteq D$ for all $(t,s)\in\Delta$, for $x\in D$ the mapping $\Delta\ni(t,s)\mapsto S(t,s)x$ is twice continuously differentiable and
	\begin{abc}
		\item $\displaystyle{\frac{\partial^2}{\partial{t^2}}S(t,s)x=A(t)S(t,s)x.}$
		\item $\displaystyle{\frac{\partial^2}{\partial{s^2}}S(t,s)x=S(t,s)A(s)x.}$
		\item $\displaystyle{\left.\frac{\partial}{\partial{t}}\frac{\partial}{\partial{s}}S(t,s)x \right|_{t=s}=0}$
	\end{abc}
	\item[\textbf{(S3)}]\label{item:S3} For all $(t,s)\in\Delta$, if $x\in D$ then $\frac{\partial}{\partial s}S(t,s)x\in D$, there exist $\frac{\partial^2}{\partial{t^2}}\frac{\partial}{\partial{s}}S(t,s)x$ and $\frac{\partial^2}{\partial{s^2}}\frac{\partial}{\partial{t}}S(t,s)x$ and the following properties hold
	\begin{abc}
		\item $\displaystyle{\frac{\partial^2}{\partial{t^2}}\frac{\partial}{\partial{s}}S(t,s)x=A(t)\frac{\partial}{\partial{s}}S(t,s)x.}$
		\item $\displaystyle{\frac{\partial^2}{\partial{s^2}}\frac{\partial}{\partial{t}}S(t,s)x=\frac{\partial}{\partial{t}}S(t,s)A(s)x.}$
		\item The mapping $\Delta\ni(t,s)\mapsto A(t)\frac{\partial}{\partial{s}}S(t,s)x$ is continuous.
	\end{abc}
\end{itemize}
Moreover, we call a fundamental solution $(S(t,s))_{(t,s)\in\Delta}$ \emph{evolutionary} if additionally
\begin{itemize}
	\item[\textbf{(S4)}]\label{item:S4} For all $(t,s),(s,r)\in\Delta$ and $x\in D$ one has 
	\begin{align*}
	\left(-\frac{\partial}{\partial{s}}S(t,s)\right)S(s,r)x+S(t,s)\frac{\partial}{\partial{s}}S(s,r)x=S(t,r)x.
	\end{align*}
\end{itemize}

\end{definition}

\begin{remark}
\label{rem:different_definitions}
\begin{abc}
	\item Note that our Definition of fundamental solutions is slightly different from \cite[Def.~2.1, Def.~3.1]{K1995Fund} in the sense that we do not assume $(t,s)\mapsto S(t,s)x$ to be continuously differentiable for all $x\in X$ in \textbf{(S1)}, but allow for the partial derivative $\ddd{s}S(t,s)x$ only for $x\in D$ in \textbf{(S1)}(d). Thus, our definition is also different from the ones used in \cite{B1997,H2011,H2013,HP2016}. The reason we adjusted the definition stems from \cite[Theorem 4.1, 5)]{K1995} which seems to be only possible for $x\in \dom(B)$ instead of $x\in X$ there. In fact, the reasoning in \cite[Remark 4.2]{K1995} does not apply for $\ddd{s}S(t,s)$ in the topology of $X$, but only in the topology of $\dom(B)$ there.
	\item Moreover, in contrast to \cite{K1995,K1995Fund,B1997,H2011,H2013,HP2016}, we only define the fundamental solutions on $\Delta$ instead of $[0,T]^2$, see also \cite{K1994}.
\end{abc}	
\end{remark}

\begin{lemma}
\label{lem:properties_S}
    Let $(S(t,s))_{(t,s)\in\Delta}$ be a fundamental solution of \eqref{eqn:nACP2} in $X$. Then $(S(t,s))_{(t,s)\in\Delta}$ and $(\frac{\partial}{\partial t} S(t,s))_{(t,s)\in\Delta}$ are bounded in $\L(X)$.
\end{lemma}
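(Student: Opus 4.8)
The plan is to derive both assertions from the uniform boundedness principle, just as in the remark following Definition~\ref{def:FundSolnACP}; the only extra point is that for the family of $t$-derivatives one first has to check that each $\frac{\partial}{\partial{t}}S(t,s)$ is actually a member of $\L(X)$, since Definition~\ref{def:FundSolnACP2} only posits the existence of this partial derivative and the continuity of $(t,s)\mapsto\frac{\partial}{\partial{t}}S(t,s)x$ for fixed $x$. For the family $(S(t,s))_{(t,s)\in\Delta}$ itself the argument is immediate: fixing $x\in X$, condition \textbf{(S1)}(b) says that $\Delta\ni(t,s)\mapsto S(t,s)x$ is continuous, hence bounded on the compact set $\Delta$; as this holds for every $x\in X$, the uniform boundedness principle gives $\sup_{(t,s)\in\Delta}\norm{S(t,s)}_{\L(X)}<\infty$.

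For $(\frac{\partial}{\partial{t}}S(t,s))_{(t,s)\in\Delta}$ I would first carry out the preliminary step of showing $\frac{\partial}{\partial{t}}S(t,s)\in\L(X)$ for each fixed $(t,s)\in\Delta$. Linearity in $x$ is clear from the definition of the derivative. For boundedness, fix $(t,s)$ and a sequence $h_n\downarrow 0$ (using backward difference quotients instead if $t=T$); the operators $D_n:=\tfrac{1}{h_n}\bigl(S(t+h_n,s)-S(t,s)\bigr)$ belong to $\L(X)$ by \textbf{(S1)}(b), and by \textbf{(S1)}(c) one has $D_n x\to\frac{\partial}{\partial{t}}S(t,s)x$ for every $x\in X$. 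Since a convergent sequence is bounded, $\sup_n\norm{D_n x}<\infty$ for each $x$, so the uniform boundedness principle yields $M:=\sup_n\norm{D_n}_{\L(X)}<\infty$, whence $\norm{\frac{\partial}{\partial{t}}S(t,s)x}=\lim_n\norm{D_n x}\le M\norm{x}$. Thus $\frac{\partial}{\partial{t}}S(t,s)\in\L(X)$.

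With that in hand the argument closes exactly as before: for fixed $x\in X$, condition \textbf{(S1)}(c) asserts that $\Delta\ni(t,s)\mapsto\frac{\partial}{\partial{t}}S(t,s)x$ is continuous, hence bounded on the compact set $\Delta$; since this holds for all $x\in X$, a further application of the uniform boundedness principle gives $\sup_{(t,s)\in\Delta}\norm{\frac{\partial}{\partial{t}}S(t,s)}_{\L(X)}<\infty$. I do not expect a genuine obstacle in this lemma; the one mildly delicate point — and hence the step I would treat most carefully — is the preliminary verification that each $\frac{\partial}{\partial{t}}S(t,s)$ lies in $\L(X)$, which is exactly what the difference-quotient argument above supplies.
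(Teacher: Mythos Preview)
Your argument is correct and follows the same route as the paper --- continuity of $(t,s)\mapsto S(t,s)x$ and $(t,s)\mapsto\frac{\partial}{\partial t}S(t,s)x$ on the compact set $\Delta$ followed by the uniform boundedness principle. Your extra care in verifying $\frac{\partial}{\partial t}S(t,s)\in\L(X)$ via difference quotients is a point the paper leaves implicit; it is a welcome piece of rigor rather than a deviation in strategy.
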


\begin{proof}
    Since \textbf{(S1)}(b)--(c) yield that $(t,s)\mapsto S(t,s)x$ and $(t,s)\mapsto \ddd{t}S(t,s)x$ are continuous on the compact set $\Delta$ for all $x\in X$, we observe that $(S(t,s))_{(t,s)\in\Delta}$ and $(\frac{\partial}{\partial t} S(t,s))_{(t,s)\in\Delta}$ 
    are pointwise bounded. The uniform boundedness principle then yields boundedness in $\L(X)$.
\end{proof}

\begin{proposition}
\label{prop:solution_nACP2}
Let $(S(t,s))_{(t,s)\in\Delta}$ be a fundamental solution of \eqref{eqn:nACP2}. If $x,y\in D$, then $u(t):=- \ddd{s}S(t,0)x + S(t,0)y$ defines a classical solution of \eqref{eqn:nACP2}.
\end{proposition}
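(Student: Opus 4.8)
The plan is to split $u$ into the two natural building blocks
\[
u_1(t):=-\ddd{s}S(t,0)x,\qquad u_2(t):=S(t,0)y,
\]
which play the roles of a cosine-type solution (data $(x,0)$) and a sine-type solution (data $(0,y)$). By linearity it suffices to show that $u_1$ and $u_2$ are classical solutions of \eqref{eqn:nACP2} with these respective initial data and then add; property \textbf{(S4)} is not needed. Throughout one uses that $x,y\in D$.

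I would first read off the initial conditions. Evaluating at $(t,s)=(0,0)$: \textbf{(S1)}(a) gives $u_2(0)=S(0,0)y=0$, and the relation $\ddd{s}S(t,s)x\big|_{t=s}=-x$ from \textbf{(S1)}(d) gives $u_1(0)=x$, hence $u(0)=x$. For the first derivative at $0$, \textbf{(S1)}(c) gives $\dot u_2(0)=\ddd{t}S(t,0)y\big|_{t=0}=y$, while \textbf{(S2)}(c) gives $\dot u_1(0)=-\ddd{t}\ddd{s}S(t,s)x\big|_{t=s=0}=0$, hence $\dot u(0)=y$.

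Next comes regularity and membership in $D$. For $u_2$ this is immediate from \textbf{(S2)}: since $y\in D$, the map $(t,s)\mapsto S(t,s)y$ is twice continuously differentiable and takes values in $D$, so $u_2\in C^2([0,T];X)$ with $u_2(t)\in D$. For $u_1$ with $x\in D$, applying \textbf{(S2)} to $(t,s)\mapsto S(t,s)x$ shows that $\ddd{t}\ddd{s}S(t,s)x$ exists and is jointly continuous, so $t\mapsto \ddd{s}S(t,0)x$ is $C^1$; then \textbf{(S3)} furnishes the second $t$-derivative $\frac{\partial^2}{\partial t^2}\ddd{s}S(t,s)x$, which is continuous in $(t,s)$ by \textbf{(S3)}(c), so $t\mapsto \ddd{s}S(t,0)x$ is in fact $C^2$; and \textbf{(S3)} also gives $\ddd{s}S(t,0)x\in D$. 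Thus $u=u_1+u_2\in C^2([0,T];X)$ with $u(t)\in D$ for every $t\in[0,T]$, in particular for $t\in(0,T]$.

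Finally I would check the differential equation by differentiating twice in $t$ at $s=0$ and invoking \textbf{(S3)}(a) for $u_1$ and \textbf{(S2)}(a) for $u_2$:
\[
\ddot u(t)=-\frac{\partial^2}{\partial t^2}\ddd{s}S(t,0)x+\frac{\partial^2}{\partial t^2}S(t,0)y=-A(t)\ddd{s}S(t,0)x+A(t)S(t,0)y=A(t)u(t).
\]
I expect the one genuinely delicate point to be the $C^2$-in-$t$ regularity of $t\mapsto \ddd{s}S(t,0)x$: one must check that the partial derivatives handed out piecemeal by \textbf{(S2)} and \textbf{(S3)} assemble into an honest second derivative of this single-variable map, which is exactly what the continuity clauses of \textbf{(S1)}(c),(d), \textbf{(S2)}, and \textbf{(S3)}(c) are designed to guarantee. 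Everything else is bookkeeping of the defining properties.
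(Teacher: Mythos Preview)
Your proposal is correct and is exactly the detailed verification the paper has in mind: the paper's own proof consists solely of the sentence ``The statement follows easily from the properties \textbf{(S1)}--\textbf{(S3)},'' and you have supplied precisely those details, invoking \textbf{(S1)}(a),(c),(d) and \textbf{(S2)}(c) for the initial conditions, \textbf{(S2)} and \textbf{(S3)} for regularity and $D$-membership, and \textbf{(S2)}(a) together with \textbf{(S3)}(a),(c) for the differential equation. There is no alternative approach to compare.
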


\begin{proof}
    The statement follows easily from the properties \textbf{(S1)}--\textbf{(S3)}.
\end{proof}

\section{Existence of Fundamental Solutions: A Generation Type Result}
\label{sec:generation}

Let $X$ be a Banach space, $T>0$, $(A(t),\dom(A(t)))_{t\in\left[0,T\right]}$ a family of closed linear operators in $X$ satisfying Assumption \ref{assump:ConstDom} with $D\subseteq X$ dense. We will further assume the following.

\begin{assumptions}\label{ass:EquiNorm}
There exists $C>0$ such that
\[
\frac{1}{C} \norm{\cdot}_{A(s)} \leq \norm{\cdot}_{A(t)} \leq  C \norm{\cdot}_{A(s)} \quad(s,t\in [0,T]).
\]
We equip $D$ with the graph norm of $A(0)$ (equivalently, with the graph norm of any $A(t)$) such that $D$ is a Banach space.
\end{assumptions}

It is worth to mention, that the equivalence of the graph norms is an assumption made regularly throughout the literature, see for example \cite[Rem.~4.5]{S2004}, \cite[Rem.~4.2]{VZ2008} and \cite[Sect.~7]{Ama1988}, just to mention a few.

\medskip
Moreover, the following regularity assumption on $A(\cdot)$ will be made.

\begin{assumptions}\label{ass:regularity_A}
    Assume $A(\cdot)x\in\mathrm{C}^1([0,T];X)$ for all $x\in D$.
\end{assumptions}

Note that the first-order non-autonomous abstract Cauchy problem \eqref{eqn:nACP} is well-posed according to \cite[Chapter VI, Def.~9.1]{EN} if Assumption \ref{assump:ConstDom} and Assumption \ref{ass:regularity_A} are satisfied and every operator generates a contractive $C_0$-semigroup, cf. \cite{K1953,K1970}.

\medskip
The following lemma will be useful in what follows.

\begin{lemma}
\label{lem:continuity_interpolationspaces}
Let $Z,X$ be Banach spaces, $Z\subseteq X$, $B\in\L(X)$.
\begin{abc}
 \item Assume $BX\subseteq Z$. Then $B\in \L(X,Z)$.
 \item Assume $BZ\subseteq Z$. Then $B\in \L(Z)$.
\end{abc}
\end{lemma}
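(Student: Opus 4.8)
The plan is to prove Lemma \ref{lem:continuity_interpolationspaces} by the closed graph theorem, once we observe that the only real content is that the maps $B\colon X\to Z$ and $B\colon Z\to Z$ are closed as maps between Banach spaces; continuity is then automatic. Throughout, $Z$ carries its own (stronger) Banach norm $\norm{\cdot}_Z$, and the inclusion $Z\hookrightarrow X$ is understood to be continuous (this is the standing meaning of $Z\subseteq X$ in the paper — e.g.\ $D$ with the graph norm sits continuously in $X$); in particular $\norm{z}_X\leq c\norm{z}_Z$ for some $c>0$ and all $z\in Z$.

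For part (a): I would define the candidate operator $\widetilde B\colon X\to Z$ by $\widetilde B x := Bx$, which makes sense precisely because $BX\subseteq Z$. To apply the closed graph theorem I take a sequence $(x_n)$ in $X$ with $x_n\to x$ in $X$ and $\widetilde B x_n\to z$ in $Z$, and must show $z=\widetilde B x=Bx$. Since $Z\hookrightarrow X$ continuously, convergence in $Z$ implies convergence in $X$, so $\widetilde B x_n=Bx_n\to z$ in $X$. On the other hand $B\in\L(X)$ gives $Bx_n\to Bx$ in $X$, and uniqueness of limits in $X$ yields $z=Bx=\widetilde B x$. Hence $\widetilde B$ is closed, both $X$ and $Z$ are Banach, so $\widetilde B\in\L(X,Z)$; this is exactly the assertion $B\in\L(X,Z)$.

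For part (b): Now $BZ\subseteq Z$, so the restriction $B|_Z\colon Z\to Z$ is well defined. Take $(z_n)$ in $Z$ with $z_n\to z$ in $Z$ and $B z_n\to w$ in $Z$. As before, $Z\hookrightarrow X$ continuously turns these into $z_n\to z$ in $X$ and $Bz_n\to w$ in $X$; combined with $Bz_n\to Bz$ in $X$ (from $B\in\L(X)$) we get $w=Bz$. So $B|_Z$ is closed on the Banach space $Z$, hence bounded, i.e.\ $B\in\L(Z)$. (Alternatively, part (b) follows from part (a) applied with the roles adjusted, composed with the continuous inclusion, but the direct closed-graph argument is cleanest.)

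There is essentially no obstacle here: the lemma is a standard soft-analysis fact, and the only point to be careful about is making explicit the hypothesis — used implicitly throughout the paper — that the inclusion $Z\subseteq X$ is continuous, since without it the closed graph argument does not go through. I would state this continuity either as part of the lemma's hypotheses or remark on it just before the proof.
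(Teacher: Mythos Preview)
Your proof is correct and follows essentially the same approach as the paper: both argue via the closed graph theorem, using continuity of the inclusion $Z\hookrightarrow X$ to transfer convergence in $Z$ to convergence in $X$ and then invoking $B\in\L(X)$ to identify the limit. Your remark that the continuity of the inclusion should be made explicit is a fair observation, as the paper's lemma statement leaves it implicit.
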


\begin{proof}
\begin{abc}
 \item Let $(x_n)$ in $X$, $x\in X$, $y\in Z$ such that $x_n\to x$ in $X$ and $Bx_n\to y$ in $Z$. Since $Z\subseteq X$ we also have $Bx_n\to y$ in $X$. Hence, by continuity of $B$ on $X$, we have $Bx=y$, so $B\colon X\to Z$ is closed and hence bounded by the closed graph theorem.
 \item Let $(x_n)$ in $Z$, $x\in Z$, $y\in Z$ such that $x_n\to x$ in $Z$ and $Bx_n\to y$ in $Z$. Since $Z\subseteq X$ we also have $x_n\to x$ in $X$ and $Bx_n\to y$ in $X$. Hence, by continuity of $B$ on $X$, we have $Bx=y$, so $B\colon Z\to Z$ is closed and hence bounded by the closed graph theorem.
\end{abc}    
\end{proof}

Let $Z$ be a Banach space such that $D\subseteq Z$ dense and $Z\subseteq X$ dense.
Let
$(\A(t),\dom(\A(t)))_{t\in\left[0,T\right]}$ in $\Z:=Z\times X$ be defined by
\begin{align*}
\A(t):=\begin{pmatrix}
0&\Id\\
A(t)&0
\end{pmatrix},\quad \dom(\A(t)):=\D:=D\times Z.
\end{align*}

\begin{theorem}\label{thm:Gen}

The following assertions are equivalent:
\begin{abc}
	\item\label{thm:Gen:item:nACP2} There exists an evolutionary fundamental solution $(S(t,s))_{(t,s)\in\Delta}$ on $X$ of \eqref{eqn:nACP2} associated to $(A(t),D)_{t\in\left[0,T\right]}$ such that
	for all $(t,s)\in\Delta$ we have 
	\begin{itemize}
	\item $S(t,s)X\subseteq Z$, $S(t,s)Z\subseteq D$, $(t,s)\mapsto S(t,s)x\in Z$ is continuous for all $x\in X$, 
	\item $\ddd{t}S(t,s)Z\subseteq Z$, $\frac{\partial^2}{\partial t^2}S(t,s)x$ exists for all $x\in Z$ and $\frac{\partial^2}{\partial t^2}S(t,s)x = A(t)S(t,s)x$, 
	\item $\ddd{s} S(t,s)x$ exists for all $x\in Z$, $\ddd{s} S(t,s)Z\subseteq Z$ and $(t,s)\mapsto \ddd{s}S(t,s)x\in Z$ is continuous for all $x\in Z$, 
	\item $\ddd{t}\ddd{s}S(t,s)D\subseteq Z$, $\ddd{t}\ddd{s} S(t,s)x$ exists for all $x\in Z$ and there exists $C\geq 0$ such that $\|\ddd{t}\ddd{s} S(t,s)x\|_X\leq C\|x\|_{Z}$ for all $x\in Z$ and $(t,s)\in\Delta$.
	\end{itemize}
	\item\label{thm:Gen:item:nACP} There exists a fundamental solution $(\U(t,s))_{(t,s)\in\Delta}$ on $\L(\Z)$ of \eqref{eqn:nACP} associated to $(\A(t),\D)_{t\in\left[0,T\right]}$.
\end{abc}
\end{theorem}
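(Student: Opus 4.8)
The plan is to prove the two implications separately, exploiting the standard correspondence between second-order problems and first-order problems in the product space $\Z = Z\times X$. In both directions the candidate object is built by an explicit $2\times 2$ operator matrix, and the work lies in verifying that the bundle of regularity hypotheses in \ref{thm:Gen:item:nACP2} matches exactly the axioms \textbf{(U1)}--\textbf{(U4)} for $\U$ on $\Z$ (and conversely).

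\medskip
\textbf{Direction \ref{thm:Gen:item:nACP2} $\Rightarrow$ \ref{thm:Gen:item:nACP}.} Given the evolutionary fundamental solution $(S(t,s))$ with the extra mapping properties, I would define
\begin{align*}
\U(t,s):=\begin{pmatrix}
-\ddd{s}S(t,s) & S(t,s)\\[2pt]
-\ddd{t}\ddd{s}S(t,s) & \ddd{t}S(t,s)
\end{pmatrix}.
\end{align*}
First I would check that $\U(t,s)\in\L(\Z)$: the $(1,1)$ entry maps $Z\to Z$ and the $(1,2)$ entry maps $X\to Z$ (so the first component lands in $Z$), using Lemma \ref{lem:continuity_interpolationspaces} together with the listed inclusions $\ddd{s}S(t,s)Z\subseteq Z$, $S(t,s)X\subseteq Z$; for the second component the $(2,1)$ entry is bounded $Z\to X$ by the explicit estimate $\|\ddd{t}\ddd{s}S(t,s)x\|_X\le C\|x\|_Z$, and the $(2,2)$ entry is bounded on $X$ by Lemma \ref{lem:properties_S}. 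Strong continuity \textbf{(U2)} follows entry-by-entry from the continuity statements assumed (including continuity of $(t,s)\mapsto S(t,s)x\in Z$ and of $(t,s)\mapsto \ddd{s}S(t,s)x\in Z$), together with Lemma \ref{lem:continuity_interpolationspaces} to upgrade continuity into the target space. The normalization $\U(t,t)=\Id_\Z$ is read off from \textbf{(S1)}: $-\ddd{s}S(t,s)|_{t=s}=\Id$, $S(t,t)=0$, $\ddd{t}S(t,s)|_{t=s}=\Id$, and $\ddd{t}\ddd{s}S(t,s)|_{t=s}=0$ by \textbf{(S2)}(c). The cocycle law \textbf{(U1)}, $\U(t,s)\U(s,r)=\U(t,r)$, is precisely the matrix identity whose $(1,1)$-entry is the evolutionary property \textbf{(S4)}, and whose remaining three entries are obtained by differentiating \textbf{(S4)} in $r$ (for the $(1,2)$ and $(2,1)$ mixed entries one differentiates in $t$ as well). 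Finally \textbf{(U3)}, $\U(t,s)\D\subseteq\D$ with $\D = D\times Z$, follows from $S(t,s)Z\subseteq D$, $\ddd{s}S(t,s)D\subseteq D$ (contained in $S(t,s)D\subseteq D$ plus \textbf{(S3)}'s statement $\ddd{s}S(t,s)x\in D$), $\ddd{t}S(t,s)Z\subseteq Z$ and $\ddd{t}\ddd{s}S(t,s)D\subseteq Z$; and \textbf{(U4)}, $\ddd{t}\U(t,s) = \A(t)\U(t,s)$ and $\ddd{s}\U(t,s)=-\U(t,s)\A(s)$, reduces entrywise to \textbf{(S2)}(a),(b) and \textbf{(S3)}(a),(b), using $\A(t)\begin{pmatrix}w_1\\ w_2\end{pmatrix} = \begin{pmatrix} w_2\\ A(t)w_1\end{pmatrix}$.

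\medskip
\textbf{Direction \ref{thm:Gen:item:nACP} $\Rightarrow$ \ref{thm:Gen:item:nACP2}.} Conversely, given a fundamental solution $(\U(t,s))$ on $\Z$, write it in block form $\U(t,s)=\bigl(\U_{ij}(t,s)\bigr)_{i,j=1,2}$ with $\U_{11},\U_{12}$ taking values in $Z$ (as the first component of $\Z$) and define $S(t,s):=\U_{12}(t,s)$, viewed as an operator on $X$. I would then recover everything else from \textbf{(U1)}--\textbf{(U4)}: computing $\ddd{t}\U(t,s)=\A(t)\U(t,s)$ entrywise gives $\ddd{t}\U_{12}=\U_{22}$ and $\ddd{t}\U_{22}=A(t)\U_{12}$, so $\ddd{t}S(t,s) = \U_{22}(t,s)$ and $\frac{\partial^2}{\partial t^2}S(t,s)x = A(t)S(t,s)x$ for $x$ in the appropriate space; similarly $\ddd{s}\U(t,s)=-\U(t,s)\A(s)$ yields $\ddd{s}\U_{12} = -\U_{11}$ and $\ddd{s}\U_{11} = -\U_{12}A(s)$ on $D$, so $\ddd{s}S(t,s) = -\U_{11}(t,s)$ and $\frac{\partial^2}{\partial s^2}S(t,s)x = S(t,s)A(s)x$; the mixed second derivatives and the estimate in the last bullet of \ref{thm:Gen:item:nACP2} come from $\ddd{t}\ddd{s}S(t,s) = -\ddd{t}\U_{11} = -\U_{21}$, which is bounded $Z\to X$ by boundedness of $\U_{21}(t,s)$ as a component of $\U(t,s)\in\L(\Z)$. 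The normalization $\U(t,t)=\Id_\Z$ gives \textbf{(S1)}(a),(c) and $\ddd{s}S(t,s)|_{t=s}=-\Id$ in (d), while \textbf{(S2)}(c) follows from $\U_{21}(t,t)=0$. Strong continuity and the various $Z$-valued continuity/inclusion statements follow from \textbf{(U2)}, \textbf{(U3)} applied to $\U$ together with Lemma \ref{lem:continuity_interpolationspaces} parts (a) and (b) (e.g.\ $\U_{12}(t,s)X\subseteq Z$ because the first component of $\U(t,s)\Z$ lies in $Z$; $\U_{12}(t,s)Z\subseteq D$ because $\U(t,s)\D\subseteq\D$, i.e.\ maps $D\times Z$ into $D\times Z$ and picking the input $(0,z)$ forces $\U_{12}(t,s)z\in D$). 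The cocycle property \textbf{(U1)} for $\U$, when restricted to the $(1,1)$ block and rewritten via $\U_{11} = -\ddd{s}S$ and $\U_{12}=S$, $\U_{21}=-\ddd{t}\ddd{s}S$, $\U_{22}=\ddd{t}S$, is exactly \textbf{(S4)}; thus the resulting fundamental solution is automatically evolutionary.

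\medskip
The main obstacle is bookkeeping rather than conceptual: one must be careful that each partial derivative of $S$ is asserted only on the space where it is actually known to exist and be continuous (hence the deliberately asymmetric hypotheses in Definition \ref{def:FundSolnACP2} and in \ref{thm:Gen:item:nACP2}), and one must repeatedly invoke Lemma \ref{lem:continuity_interpolationspaces} to pass between boundedness/continuity on $X$ and on $Z$ or $D$ — in particular the closed graph arguments hidden there are what make the block entries genuinely bounded as operators between the smaller spaces. A secondary subtlety is checking that differentiating the identity \textbf{(S4)} in $r$ (and in $t$) is legitimate in the relevant topology and produces precisely the off-diagonal entries of the matrix cocycle identity; this is where one uses the continuity of $(t,s)\mapsto\ddd{s}S(t,s)x\in Z$ and of $(t,s)\mapsto\ddd{t}S(t,s)x$ together with \textbf{(S2)}/\textbf{(S3)} to justify interchanging limits.
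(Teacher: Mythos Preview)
Your proposal is correct and follows essentially the same route as the paper: define $\U(t,s)$ as the $2\times2$ block matrix built from $S$, $\ddd{t}S$, $-\ddd{s}S$, $-\ddd{t}\ddd{s}S$ (and conversely $S:=\U_{12}$), then match \textbf{(S1)}--\textbf{(S4)} plus the bullet-point mapping properties against \textbf{(U1)}--\textbf{(U4)} entry by entry, using density of $D$ in $Z$ and Lemma~\ref{lem:continuity_interpolationspaces} to extend and upgrade boundedness. One small indexing slip: \textbf{(S4)} is the $(1,2)$-entry of the cocycle identity (apply $\U(t,s)\U(s,r)=\U(t,r)$ to $\binom{0}{x}$ and read the first component), not the $(1,1)$-entry; the $(1,1)$, $(2,2)$ and $(2,1)$ entries are then the $\partial_r$-, $\partial_t$- and $\partial_t\partial_r$-derivatives of \textbf{(S4)} respectively --- the paper handles this by computing the full matrix product on $D\times D$ and invoking \textbf{(S4)} after pulling the derivatives outside, exactly as you sketch in your ``secondary subtlety'' paragraph.
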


\begin{proof}
We first prove the implication \eqref{thm:Gen:item:nACP}$\Longrightarrow$\eqref{thm:Gen:item:nACP2}.

\medskip

Define $(S(t,s))_{(t,s)\in\Delta}$ on $X$ by
\begin{align}\label{eqn:FundSolnACP2}
S(t,s)x:=\pi_1\U(t,s)\begin{pmatrix} 0\\x\end{pmatrix},\quad x\in X,
\end{align} 
where $\pi_1\from Z\times X \to Z$ denotes the projection on the first component. Hence, $S(t,s)\in \L(X,Z)$ and since $Z\subseteq X$ we also have $S(t,s)\in \L(X)$ for all $(t,s)\in\Delta$. Since $(\U(t,s))_{(t,s)\in\Delta}$ is strongly continuous on $\Z$, also $(t,s)\mapsto S(t,s) \in \L(X,Z)$ strongly continuous.

\medskip
Let us provide some useful formulas. First, we note that $\pi_1 \A(t) = \pi_2|_{\D}$, where $\pi_2\from Z\times X\to X$ is the projection on the second component. Second, for $x\in Z$ we have that $\A(s)\begin{pmatrix}0\\x\end{pmatrix} = \begin{pmatrix}x\\0\end{pmatrix}$ for all $s\in[0,T]$. In particular, for $x\in D$ we have $\A(s) \begin{pmatrix}0\\x\end{pmatrix} \in \D$ for all $s\in[0,T]$.

\medskip
For $x\in Z$ we have $\begin{pmatrix}0\\x\end{pmatrix}\in \D$, so by \textbf{(U3)} we obtain $S(t,s)x\in D$ for all $(t,s)\in \Delta$. Moreover, $t\mapsto S(t,s)x$ is differentiable and 
\[\ddd{t}S(t,s)x = \pi_1 \ddd{t}\U(t,s)\begin{pmatrix}0\\x\end{pmatrix} = \pi_1 \A(t)\U(t,s)\begin{pmatrix}0\\x\end{pmatrix} = \pi_2 \U(t,s)\begin{pmatrix}0\\x\end{pmatrix}.\]
By \textbf{(U3)} we observe $\ddd{t}S(t,s)x \in Z$.

\medskip
Let $x\in Z$. Then, by \textbf{(U4)}, we have that $\U(t,s)\begin{pmatrix} 0\\x\end{pmatrix}$ is differentiable with respect to $s$ and we obtain
\[\ddd{s}\U(t,s)\begin{pmatrix} 0\\x\end{pmatrix} = -\U(t,s)\A(s)\begin{pmatrix} 0\\x\end{pmatrix} = -\U(t,s) \begin{pmatrix} x\\0\end{pmatrix}.\]
Thus, $\ddd{s}S(t,s)x$ exists, $\ddd{s}S(t,s)x\in Z$ and
\[\ddd{s}S(t,s)x = \pi_1 \ddd{s}\U(t,s)\begin{pmatrix} 0\\x\end{pmatrix} = -\pi_1 \U(t,s) \begin{pmatrix} x\\0\end{pmatrix}\]
for all $(t,s)\in\Delta$. As $(\U(t,s))_{(t,s)\in\Delta}$ is strongly continuous, also $(t,s)\mapsto \ddd{s}S(t,s)x\in Z$ is continuous for all $x\in Z$.

\medskip
We now show existence of the mixed derivative of $(S(t,s))_{(t,s)\in\Delta}$. First, let $x\in D$. 
Then, by \textbf{(U4)} and the above, we have that $\ddd{s}\U(t,s)\begin{pmatrix} 0\\x\end{pmatrix}$ is differentiable with respect to $t$ and
\[\ddd{t}\ddd{s}\U(t,s)\begin{pmatrix} 0\\x\end{pmatrix} = -\ddd{t}\U(t,s) \begin{pmatrix} x\\0\end{pmatrix} = -\A(t)\U(t,s)\begin{pmatrix}x\\0\end{pmatrix}.\]
Thus, $\ddd{t}\ddd{s}S(t,s)x$ exists and
\[\ddd{t}\ddd{s}S(t,s)x = -\pi_1 \A(t)\U(t,s)\begin{pmatrix}x\\0\end{pmatrix} = \pi_2 \U(t,s)\begin{pmatrix}x\\0\end{pmatrix}.\]
By \textbf{(U3)} this shows that $\ddd{t}\ddd{s}S(t,s)x\in Z$.

\medskip
Let $x\in Z$. Then $\begin{pmatrix} 0\\x\end{pmatrix}\in \D$, so as above
\[\ddd{t} S(t,s)x = \pi_2 \U(t,s)\begin{pmatrix} 0\\x\end{pmatrix}.\]
As the right-hand side is differentiable with respect to $t$, $\frac{\partial^2}{\partial t^2} S(t,s)x$ exists and
\begin{align*}
 \frac{\partial^2}{\partial t^2} S(t,s)x & = \pi_2 \A(t)\U(t,s)\begin{pmatrix} 0\\x\end{pmatrix} = \pi_2 \A(t)\begin{pmatrix} \pi_1\\\pi_2\end{pmatrix} \U(t,s)\begin{pmatrix} 0\\x\end{pmatrix} \\
 & = A(t)\pi_1 \U(t,s)\begin{pmatrix} 0\\x\end{pmatrix} = A(t) S(t,s)x.
\end{align*}

Now, since $(\U(t,s))_{(t,s)\in \Delta}$ is bounded on $\Z$, there exists $C\geq 0$ such that $\norm{\U(t,s)}_{\L(\Z)}\leq C$ for all $(t,s)\in\Delta$. 
Thus,
\begin{equation}
\label{eqn:dtdsS_Z_X}
\norm{\ddd{t}\ddd{s}S(t,s)x}_X \leq C\norm{x}_{Z}\end{equation}
for all $(t,s)\in\Delta$ and $x\in Z$. 
Let $x\in Z$ and choose $(x_n)$ in $D$ such that $x_n\to x$ in $Z$. Then $t\mapsto \ddd{s}S(t,s)x_n$ is continuously differentiable for all $n\in\NN$ and these functions converge uniformly to $t\mapsto \ddd{s}S(t,s)x$, by boundedness of $(\U(t,s))_{(t,s)\in\Delta}$. Moreover, $t\mapsto \ddd{t}\ddd{s}S(t,s)x_n$ is continuous  for all $n\in\NN$ and these functions converge uniformly by the estimate above. Thus, $t\mapsto \ddd{s}S(t,s)x$ is continuously differentiable and \eqref{eqn:dtdsS_Z_X} holds for all $x\in Z$.

\medskip

We are now ready to prove \textbf{(S1)}--\textbf{(S4)} similarly as in \cite[Sect.~3]{K1994} and \cite[Thm.~4.1]{K1995}.

\medskip

\textbf{(S1)}: We prove (a)--(d) step by step.
\begin{abc}
 \item
 For $t\in[0,T]$ we have $\U(t,t) = \Id$ and therefore $S(t,t) = 0$. 
 \item 
 Strong continuity of $(t,s)\mapsto S(t,s)$ follows from strong continuity of $(t,s)\mapsto \U(t,s)$ and continuity of $\pi_1$.
 \item
 Let $s\in [0,T]$ and $x\in Z$. Then $\begin{pmatrix}0\\x\end{pmatrix}\in \D$ and \textbf{(U4)} yields that $[s,T]\ni t \mapsto \U(t,s)\begin{pmatrix}0\\x\end{pmatrix}$ is continuously differentiable. Thus, also $[s,T]\ni t\mapsto S(t,s)x$ is continuously differentiable, and by \textbf{(U4)} we compute
\[\frac{\partial}{\partial{t}}S(t,s)x = \frac{\partial}{\partial{t}} \pi_1\U(t,s)\begin{pmatrix} 0\\x\end{pmatrix} = \pi_1 \frac{\partial}{\partial{t}}\U(t,s)\begin{pmatrix} 0\\x\end{pmatrix} = \pi_1 \A(t)\U(t,s)\begin{pmatrix} 0\\x\end{pmatrix} = \pi_2 \U(t,s)\begin{pmatrix} 0\\x\end{pmatrix}.\]
Now, for $x\in X$ we choose $(x_n)$ in $Z$ such that $x_n\to x$ in $X$. Then $t\mapsto S(t,s)x_n$ is continuously differentiable for all $n\in\NN$ and these functions converge uniformly to $t\mapsto S(t,s)x$, by boundedness of $(S(t,s))_{(t,s)\in\Delta}$. Moreover, $t\mapsto \ddd{t}S(t,s)x_n$ is continuous for all $n\in\NN$ and these functions converge uniformly by boundedness of $(\U(t,s))_{(t,s)\in\Delta}$. Thus, $t\mapsto S(t,s)x$ is continuously differentiable with
\[\frac{\partial}{\partial{t}}S(t,s)x = \pi_2 \U(t,s)\begin{pmatrix} 0\\x\end{pmatrix}.\]
Since the right-hand side is continuous in $(t,s)$, we have that $(t,s)\mapsto \ddd{t}S(t,s)x$ is continuous.
Moreover,
\[\left.\ddd{t}S(t,s)x\right|_{t=s} = \pi_2 \U(s,s)\begin{pmatrix} 0\\x\end{pmatrix} = x.\]
 
 \item
 We have already established that $\ddd{s}S(t,s)x$ exists for all $x\in Z$ and $(t,s)\in\Delta$, and that
 \[\ddd{s} S(t,s)x = -\pi_1 \U(t,s)\begin{pmatrix}x\\0\end{pmatrix}.\]
 Since the right-hand side is continuous in $s$, $s\mapsto S(t,s)x$ is continuously differentiable with
 \[\ddd{s}S(t,s)x = -\pi_1 \U(t,s)\begin{pmatrix} x\\0\end{pmatrix}.\]
 Since the right-hand side is continuous in $(t,s)$, we have that $(t,s)\mapsto \ddd{s}S(t,s)x$ is continuous. 
 Moreover,
 \[\left. \ddd{s} S(t,s)x\right|_{t=s} = -\pi_1 \U(s,s)\begin{pmatrix}x\\0\end{pmatrix} = -x.\]
\end{abc}

\medskip

\textbf{(S2)}: We have already established $S(t,s)Z\subseteq D$, so since $D\subseteq Z$ we have $S(t,s)D\subseteq D$ for all $(t,s)\in\Delta$.

Let $x\in D$. 
By \textbf{(U4)} and the formulas at the beginning of the proof we have that $\Delta\ni(t,s)\mapsto \U(t,s)\begin{pmatrix}0\\x\end{pmatrix}$ is twice continuously differentiable. Thus, also $\Delta\ni(t,s)\mapsto S(t,s)x = \pi_1 \U(t,s)\begin{pmatrix}0\\x\end{pmatrix}$ is twice continuously differentiable. For the second derivatives, with \textbf{(U4)} we compute:

\begin{abc}
 \item
 \begin{align*}
    \frac{\partial^2}{\partial{t^2}}S(t,s)x & = \frac{\partial^2}{\partial{t^2}} \pi_1 \U(t,s) \begin{pmatrix} 0\\x\end{pmatrix} =  \frac{\partial}{\partial{t}}\pi_1 \frac{\partial}{\partial{t}} \U(t,s) \begin{pmatrix} 0\\x\end{pmatrix} = \frac{\partial}{\partial{t}}\pi_1 \A(t)\U(t,s) \begin{pmatrix} 0\\x\end{pmatrix} \\
    & = \frac{\partial}{\partial{t}}\pi_2 \U(t,s)\begin{pmatrix} 0\\x\end{pmatrix}
    = \pi_2 \frac{\partial}{\partial{t}} \U(t,s)\begin{pmatrix} 0\\x\end{pmatrix}
    = \pi_2 \A(t)\U(t,s)\begin{pmatrix} 0\\x\end{pmatrix} = A(t)S(t,s)x.
\end{align*}
 \item
\begin{align*}
    \frac{\partial^2}{\partial{s^2}}S(t,s)x & = \pi_1 \frac{\partial^2}{\partial{s^2}}\U(t,s) \begin{pmatrix} 0\\x\end{pmatrix} =  -\pi_1 \frac{\partial}{\partial{s}} \U(t,s) \A(s)\begin{pmatrix} 0\\x\end{pmatrix} = -\pi_1 \frac{\partial}{\partial{s}} \U(t,s)\begin{pmatrix} x\\0\end{pmatrix} \\
    & = \pi_1 \U(t,s)\A(s)\begin{pmatrix} x\\0\end{pmatrix}
    = \pi_1 \U(t,s)\begin{pmatrix} 0\\A(s)x\end{pmatrix} = S(t,s)A(s)x.
\end{align*} 
 \item
\begin{align*}
    \frac{\partial}{\partial{s}}\frac{\partial}{\partial{t}}S(t,s)x & = \frac{\partial}{\partial{s}} \pi_1 \A(t)\U(t,s) \begin{pmatrix} 0\\x\end{pmatrix} = -\pi_1 \A(t) \U(t,s)\A(s)\begin{pmatrix} 0\\x\end{pmatrix} = -\pi_2 \U(t,s) \begin{pmatrix} x\\0\end{pmatrix}
\end{align*}
This also yields the last assertion by \textbf{(U1)}. 
\end{abc}

\medskip

\textbf{(S3)}: 
Let $(t,s)\in\Delta$ and $x\in D$. Then by \textbf{(U4)} we have $\frac{\partial}{\partial{s}}S(t,s)x = -\pi_1 \U(t,s) \begin{pmatrix} x\\0\end{pmatrix}\in D$ by \textbf{(U3)}. 

Moreover, by \textbf{(U4)} we observe that $\frac{\partial^2}{\partial{t^2}} \frac{\partial}{\partial{s}}S(t,s)x$ and $\frac{\partial^2}{\partial{s^2}} \frac{\partial}{\partial{t}}S(t,s)x$ exist and we can compute these derivatives:

\begin{abc}
 \item
 \begin{align*}
    \frac{\partial^2}{\partial{t^2}} \frac{\partial}{\partial{s}}S(t,s)x & = - \frac{\partial^2}{\partial{t^2}} \pi_1 \U(t,s) \begin{pmatrix} x\\0\end{pmatrix} = -\frac{\partial}{\partial{t}} \pi_1 \A(t) \U(t,s) \begin{pmatrix} x\\0\end{pmatrix}
    = -\frac{\partial}{\partial{t}} \pi_2 \U(t,s) \begin{pmatrix} x\\0\end{pmatrix} \\
    & = -\pi_2 \A(t) \U(t,s) \begin{pmatrix} x\\0\end{pmatrix} = A(t) \Bigl(-\pi_1 \U(t,s)\begin{pmatrix} x\\0\end{pmatrix}\Bigr) = A(t) \frac{\partial}{\partial{s}}S(t,s)x.
\end{align*}
 \item
 \begin{align*}
    \frac{\partial^2}{\partial{s^2}} \frac{\partial}{\partial{t}}S(t,s)x & = \frac{\partial^2}{\partial{s^2}} \pi_2 \U(t,s) \begin{pmatrix} 0\\x\end{pmatrix} = -\frac{\partial}{\partial{s}} \pi_2 \U(t,s)\A(s) \begin{pmatrix} 0\\x\end{pmatrix}
    = -\frac{\partial}{\partial{s}} \pi_2 \U(t,s) \begin{pmatrix} x\\0\end{pmatrix} \\
    & = \pi_2 \U(t,s) \A(s)\begin{pmatrix} x\\0\end{pmatrix} = \pi_2 \U(t,s) \begin{pmatrix} 0\\A(s)x\end{pmatrix} = \frac{\partial}{\partial{t}}S(t,s)A(s)x,
\end{align*}
where the last equality follows from \textbf{(S1)}(c).
 \item
The continuity of $\Delta\ni (t,s)\mapsto A(t) \frac{\partial}{\partial{s}}S(t,s)x$ follows from strong continuity of $t\mapsto A(t)y$ for all $y\in D$, continuity of $(t,s)\mapsto \ddd{s}S(t,s)x$ for all $x\in D$ and the fact that $\ddd{s}S(t,s)D\subseteq D$.
\end{abc}

\medskip

\textbf{(S4)}: 
Let $(t,s),(s,r)\in\Delta$ and $x\in D$. Then, by \textbf{(U1)}, we compute
	\begin{align*}
	\left(-\frac{\partial}{\partial{s}}S(t,s)\right)S(s,r)x+S(t,s)\frac{\partial}{\partial{s}}S(s,r)x 
	& = \pi_1 \U(t,s) \begin{pmatrix} \pi_1\\\pi_2\end{pmatrix} \U(s,r)\begin{pmatrix}0\\x\end{pmatrix}\\
	& = \pi_1 \U(t,s)\U(s,r) \begin{pmatrix}0\\x\end{pmatrix} = \pi_1 \U(t,r)\begin{pmatrix}0\\x\end{pmatrix} = S(t,r)x.
	\end{align*}

\bigskip

Let us prove the converse implication \eqref{thm:Gen:item:nACP2}$\Longrightarrow$\eqref{thm:Gen:item:nACP}.

\medskip

For $(t,s)\in\Delta$ and $\begin{pmatrix}x\\y\end{pmatrix}\in D\times X$ we define
\begin{align*}
\U(t,s)\begin{pmatrix}x\\y\end{pmatrix}: = \begin{pmatrix}
-\frac{\partial}{\partial s}S(t,s)x+ S(t,s)y\\
-\frac{\partial}{\partial t}\frac{\partial}{\partial s}S(t,s)x + \frac{\partial}{\partial t}S(t,s)y
\end{pmatrix}=\begin{pmatrix}
-\frac{\partial}{\partial s}S(t,s) & S(t,s)\\
-\frac{\partial}{\partial t}\frac{\partial}{\partial s}S(t,s) & \frac{\partial}{\partial t}S(t,s)
\end{pmatrix}\begin{pmatrix}x\\y\end{pmatrix}\in Z\times X.
\end{align*}

We first show that $(\U(t,s))_{(t,s)\in\Delta}$ can be extended to a family in $\L(\Z)$.
Let $x\in Z$. Then there exists $(x_n)$ in $D$ such that $x_n\to x$ in $Z$. Since $S(t,s)\in \L(X,Z)$ by Lemma \ref{lem:continuity_interpolationspaces}(a), by \textbf{(S2)} and $D\subseteq Z$ we have that the functions $s\mapsto S(t,s)x_n\in Z$ converge pointwise to $s\mapsto S(t,s)x\in Z$. Moreover, we have
\[\ddd{s}S(t,s)x_n = \ddd{s}S(s,s)x_n + \int_s^t \ddd{t}\ddd{s}S(\tau,s)x_n\,\dd \tau = -x_n + \int_s^t \ddd{t}\ddd{s}S(\tau,s)x_n\,\dd \tau\]
for all $n\in\NN$, by \textbf{(S1)}(c). By assumption, we have that the functions $t\mapsto \ddd{t}\ddd{s} S(t,s)x_n$ converge uniformly to $t\mapsto \ddd{t}\ddd{s}S(t,s)x$. Thus, also the functions $t\mapsto \ddd{s}S(t,s)x_n\in Z$ converge uniformly. Thus, $t\mapsto S(t,s)x$ is differentiable. Lemma \ref{lem:continuity_interpolationspaces}(b) now yields $\ddd{s}S(t,s)\in\L(Z)$ for all $(t,s)\in\Delta$.
As $\ddd{t}\ddd{s}S(t,s)\in \L(Z,X)$ for all $(t,s)\in\Delta$ by assumption, we can continuously extend $\U(t,s)$ to $\Z$ for all $(t,s)\in\Delta$.

\medskip

We now check \textbf{(U1)}--\textbf{(U4)}.

\medskip

\textbf{(U1)}:
By \textbf{(S1)}(c)--(d) and \textbf{(S2)}(c), for $\begin{pmatrix}x\\y\end{pmatrix}\in D\times X$ we obtain $\U(t,t)\begin{pmatrix}x\\y\end{pmatrix} = \begin{pmatrix}x\\y\end{pmatrix}$ for all $t\in[0,T]$. Thus, by continuity and denseness of $D$ in $Z$, $\U(t,t)=\Id$ for all $t\in[0,T]$.

Let $\begin{pmatrix}x\\y\end{pmatrix}\in D\times D$ and $(t,s), (s,r)\in\Delta$. By \textbf{(S3)} we have $\ddd{r}S(s,r)x\in D\subseteq Z$, and $S(s,r)y\in Z$ by \textbf{(S2)}. Thus, by \textbf{(S4)}, we observe
\begin{align*}
&\U(t,s)\U(s,r)\begin{pmatrix}x\\y\end{pmatrix}=\begin{pmatrix}
-\frac{\partial}{\partial s}S(t,s) & S(t,s)\\
-\frac{\partial}{\partial t}\frac{\partial}{\partial s}S(t,s) & \frac{\partial}{\partial t}S(t,s)
\end{pmatrix}\begin{pmatrix}
-\frac{\partial}{\partial r}S(s,r)x + S(s,r)y\\
-\frac{\partial}{\partial s}\frac{\partial}{\partial r}S(s,r)x + \frac{\partial}{\partial s}S(s,r)y
\end{pmatrix}\\
=&\begin{pmatrix}
\left(\ddd{s}S(t,s)\right)\ddd{r}S(s,r)x-S(t,s)\ddd{s}\ddd{r}S(s,r)x + \left(-\ddd{s}S(t,s)\right)S(s,r)y+S(t,s)\ddd{s}S(s,r)y\\
\left(\frac{\partial}{\partial t}\frac{\partial}{\partial s}S(t,s)\right)\frac{\partial}{\partial r}S(s,r)x-\frac{\partial}{\partial t}S(t,s)\frac{\partial}{\partial s}\frac{\partial}{\partial r}S(s,r)x -\left(\frac{\partial}{\partial t}\frac{\partial}{\partial s}S(t,s)\right)S(s,r)y+\frac{\partial}{\partial t}S(t,s)\frac{\partial}{\partial s}S(s,r)y
\end{pmatrix}\\
=&\begin{pmatrix}
-\ddd{r}\left(\left(-\ddd{s}S(t,s)\right)S(s,r)x+S(t,s)\ddd{s}S(s,r)x\right) + \left(-\ddd{s}S(t,s)\right)S(s,r)y+S(t,s)\ddd{s}S(s,r)y\\
-\ddd{t}\ddd{r}\left(\left(-\ddd{s}S(t,s)\right)S(s,r)x+S(t,s)\ddd{s}S(s,r)x\right) + \ddd{t}\left(\left(-\ddd{s}S(t,s)\right)S(s,r)+S(t,s)\ddd{s}S(s,r)\right)y
\end{pmatrix}\\
=&\begin{pmatrix}
-\frac{\partial}{\partial r}S(t,r)x + S(t,r)y\\
-\frac{\partial}{\partial t}\frac{\partial}{\partial r}S(t,r)x + \frac{\partial}{\partial t}S(t,r)y
\end{pmatrix}=\U(t,r)\begin{pmatrix}x\\y\end{pmatrix}.
\end{align*}
Since $D\times D$ is dense in $\Z$ and $\U(t,r)$, $\U(t,s)$ and $\U(s,r)$ are continuous, we obtain the assertion.

\medskip

\textbf{(U2)}:
We show componentwise the strong continuity of $(\mathcal{U}(t,s))_{(t,s)\in\Delta}$. To do so for the first component, let $\begin{pmatrix}x\\y\end{pmatrix}\in Z\times X$, then
\[
\pi_1\mathcal{U}(t,s)\begin{pmatrix}x\\y\end{pmatrix}=\frac{\partial}{\partial{s}}S(t,s)x+S(t,s)y \in Z.
\]
By assumption on $(S(t,s))_{(t,s)\in\Delta}$, we obtain strong continuity in the first component. 

Likewise, for $\begin{pmatrix}x\\y\end{pmatrix}\in D\times X$ we observe that
\[
\pi_2\mathcal{U}(t,s)\binom{x}{y}=-\frac{\partial}{\partial t}\frac{\partial}{\partial{s}}S(t,s)x+\frac{\partial}{\partial t}S(t,s)y.
\]
By \textbf{(S2)} we know that $(t,s)\mapsto S(t,s)x$ is twice continuously differentiable. As we also assumed that $\|\ddd{t}\ddd{s} S(t,s)x\|_X\leq C\|x\|_{Z}$ for all $x\in Z$ and $(t,s)\in\Delta$ we also conclude the strong continuity of the first term.
For the second term, \textbf{(S1)}(c) directly yields the strong continuity. Thus, we obtain strong continuity in the second component.

\medskip

\textbf{(U3)}:
Let $\begin{pmatrix} x\\y\end{pmatrix}\in \D$ and $(t,s)\in \Delta$. Then $x\in D$ and therefore $\frac{\partial}{\partial s} S(t,s)x\in D$ by \textbf{(S3)}. Moreover, $S(t,s)y\in D$ by assumption, and $\ddd{t}\ddd{s}S(t,s)x\in Z$ and $\ddd{t}S(t,s)y\in Z$ by assumption. Hence,
\[\U(t,s)\begin{pmatrix} x\\y\end{pmatrix} = \begin{pmatrix} \frac{\partial}{\partial s} S(t,s)x + S(t,s)y\\-\frac{\partial}{\partial t} \frac{\partial}{\partial s} S(t,s)x+\frac{\partial}{\partial t}S(t,s)y\end{pmatrix}\in \D.\]

\medskip

\textbf{(U4)}:
Let $\begin{pmatrix}x\\y\end{pmatrix}\in \D$. Then $\Delta\ni(t,s)\mapsto S(t,s)x$ is two times continuously differentiable by \textbf{(S2)}, and $\frac{\partial^2}{\partial{t^2}}\frac{\partial}{\partial{s}}S(t,s)x$ exists by \textbf{(S3)}. 
Moreover, by assumption $\frac{\partial^2}{\partial{t^2}}S(t,s)y$ exists and $\frac{\partial^2}{\partial{t^2}}S(t,s)y = A(t)S(t,s)y$.
Thus, we observe that
\[\frac{\partial}{\partial{t}}\U(t,s)\begin{pmatrix} x\\y\end{pmatrix} = \begin{pmatrix}
 -\frac{\partial}{\partial{t}}\frac{\partial}{\partial{s}}S(t,s)x + \frac{\partial}{\partial{t}}S(t,s)y\\
 -\frac{\partial^2}{\partial{t^2}}\frac{\partial}{\partial{s}}S(t,s)x + \frac{\partial^2}{\partial{t^2}}S(t,s)y\end{pmatrix}
 \]
exists, and by assumption and \textbf{(S3)}(a) we conclude
\begin{align*}
    \frac{\partial}{\partial{t}}\U(t,s)\begin{pmatrix} x\\y\end{pmatrix} & = \begin{pmatrix}
 -\frac{\partial}{\partial{t}}\frac{\partial}{\partial{s}}S(t,s)x + \frac{\partial}{\partial{t}}S(t,s)y\\
 -\frac{\partial^2}{\partial{t^2}}\frac{\partial}{\partial{s}}S(t,s)x + \frac{\partial^2}{\partial{t^2}}S(t,s)y\end{pmatrix} = \begin{pmatrix}
 -\frac{\partial}{\partial{t}}\frac{\partial}{\partial{s}}S(t,s)x + \frac{\partial}{\partial{t}}S(t,s)y\\
 -A(t)\frac{\partial}{\partial{s}}S(t,s)x + A(t)S(t,s)y\end{pmatrix} \\
 & = \begin{pmatrix} 0 & \Id\\ A(t) & 0 \end{pmatrix}\U(t,s)\begin{pmatrix} x\\y\end{pmatrix} = \A(t)\U(t,s)\begin{pmatrix} x\\y\end{pmatrix}.\end{align*} 
 
To show the other statement, we first let $\begin{pmatrix}x\\y\end{pmatrix}\in D\times D$. Then $\Delta\ni(t,s)\mapsto S(t,s)x$ and $\Delta\ni(t,s)\mapsto S(t,s)y$ are two times continuously differentiable by \textbf{(S2)}. Moreover, $\frac{\partial^2}{\partial{s^2}}\frac{\partial}{\partial{t}}S(t,s)x$ exists by \textbf{(S3)}. 
Thus,
\[\frac{\partial}{\partial{s}}\U(t,s)\begin{pmatrix} x\\y\end{pmatrix} = \begin{pmatrix}
 -\frac{\partial^2}{\partial{s^2}}S(t,s)x + \frac{\partial}{\partial{s}}S(t,s)y\\
 -\frac{\partial^2}{\partial{s^2}}\frac{\partial}{\partial{t}}S(t,s)x + \frac{\partial}{\partial{s}}\frac{\partial}{\partial{t}}S(t,s)y\end{pmatrix}
 \]
exists, and by \textbf{(S2)}(b) and \textbf{(S3)}(b) we conclude
\begin{align*}
    \frac{\partial}{\partial{s}}\U(t,s)\begin{pmatrix} x\\y\end{pmatrix} & = \begin{pmatrix}
 -\frac{\partial^2}{\partial{s^2}}S(t,s)x + \frac{\partial}{\partial{s}}S(t,s)y\\
 -\frac{\partial^2}{\partial{s^2}}\frac{\partial}{\partial{t}}S(t,s)x + \frac{\partial}{\partial{s}}\frac{\partial}{\partial{t}}S(t,s)y\end{pmatrix} = \begin{pmatrix}
 -S(t,s)A(s)x + \frac{\partial}{\partial{s}}S(t,s)y\\
 -\frac{\partial}{\partial{t}}S(t,s)A(s)x + \frac{\partial}{\partial{s}}\frac{\partial}{\partial{t}}S(t,s)y\end{pmatrix}\\
 & = -\U(t,s) \begin{pmatrix} 0 & \Id\\ A(s) & 0 \end{pmatrix} \begin{pmatrix} x\\y\end{pmatrix} = -\U(t,s)\A(s)\begin{pmatrix} x\\y\end{pmatrix}.
\end{align*}

Now, let $\begin{pmatrix}x\\y\end{pmatrix}\in \D$. There exists $(y_n)$ in $D$ such that $y_n\to y$ in $Z$.
Then the sequence of continuous functions $s\mapsto \U(t,s)\begin{pmatrix}x\\y_n\end{pmatrix}$ converge uniformly to $s\mapsto \U(t,s)\begin{pmatrix}x\\y\end{pmatrix}$ by boundedness of $(\U(t,s))_{(t,s)\in\Delta}$. Moreover, the sequence of functions $t\mapsto \ddd{s}\U(t,s)\begin{pmatrix}x\\y_n\end{pmatrix} = \U(t,s)\A(s)\begin{pmatrix} x\\y_n\end{pmatrix} = \U(t,s)\begin{pmatrix} y_n\\A(s)x\end{pmatrix}$ converges uniformly to $s\mapsto \U(t,s)\A(s)\begin{pmatrix}x\\y\end{pmatrix}$, since $(\U(t,s))_{(t,s)\in\Delta}$ is bounded and $s\mapsto A(s)x$ is continuous. 
Thus, 
$\ddd{s}\U(t,s)\begin{pmatrix}x\\y\end{pmatrix}$ exists and
\[\ddd{s}\U(t,s)\begin{pmatrix}x\\y\end{pmatrix} = \U(t,s)\A(s)\begin{pmatrix}x\\y\end{pmatrix}.\]
\end{proof}

\begin{remark}
    If Theorem \ref{thm:Gen}(\ref{thm:Gen:item:nACP2}) is true the statement in Proposition \ref{prop:solution_nACP2} can be extended to $y\in Z$.
\end{remark}

\begin{remark}[The Choice of the Space $Z$]
    Let us remark on the choice of the space $Z$.
    \begin{abc}
     \item 
     Let us consider the autonomous case, i.e., let $A$ be a densely defined closed operator and $A(t) = A$ for all $t\in[0,T]$.
     Then $Z$ is the Kisy\'{n}ski-space given by
     \[Z = \{x\in X:\ S(\cdot,\cdot)x\in C(\Delta;D)\},\]
     cf.\ \cite{Ki1972} or \cite[Theorem 3.14.11]{ABHN2011}. Note that in this case the space is uniquely defined.
     In this case, $A$ generates a so-called cosine family.
     
     \item
     For all $t\in [0,T]$ let $A(t)$ generate a cosine family. Then by \cite{TW1981}, w.l.o.g.\ we may assume that there exists $(B(t))_{t\in[0,T]}$ such that $B(t)^2 = A(t)$ for all $t\in [0,T]$. Let us assume that $(B(t))_{t\in[0,T]}$ satisfies Assumption \ref{assump:ConstDom} and Assumption \ref{ass:EquiNorm}. Then, under some further assumptions, the space $Z$ can be chosen to be $Z:=\dom(B(t))$ for some/all $t\in[0,T]$ (equipped with the graph norm); cf.\ \cite{K1994,K1995,K1995Fund}.
    \end{abc}
\end{remark}

As we have seen in the above remark, in the autonomous case we have an explicit description of the space $Z$.

\begin{conjecture}
Let $(S(t,s))_{(t,s)\in\Delta}$ be an evolutionary fundamental solution on $X$ of \eqref{eqn:nACP2} associated to $(A(t),\dom(A(t)))_{t\in [0,T]}$. We conjecture that $Z$ has the form
\[Z = \{x\in X:\ S(\cdot,\cdot)x\in \mathrm{C}(\Delta;D)\}\]
equipped with the norm $\|\cdot\|_Z$ given by
\[\|x\|_Z := \|x\|_X + \sup_{(t,s)\in\Delta} \|A(t)S(t,s)x\|_X.\]
\end{conjecture}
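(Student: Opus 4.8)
We sketch a possible approach, guided by the autonomous case recalled in the remark on the choice of the space $Z$. Throughout, write $Z_0$ for the set on the right-hand side, equipped with the norm $\|\cdot\|_Z$ of the statement; by Assumption~\ref{ass:EquiNorm} this norm is equivalent to the one obtained by replacing every $A(t)$ by $A(0)$, so we may switch between them at will. First, $(Z_0,\|\cdot\|_Z)$ is a Banach space: if $(x_n)$ is Cauchy in $Z_0$, then $x_n\to x$ in $X$ (as $\|\cdot\|_X\le\|\cdot\|_Z$) and $(t,s)\mapsto A(0)S(t,s)x_n$ converges uniformly on the compact set $\Delta$ to some continuous $g$; since each $S(t,s)\in\L(X)$ by Lemma~\ref{lem:properties_S} and $A(0)$ is closed, $S(t,s)x\in D$ with $A(0)S(t,s)x=g(t,s)$, so $x\in Z_0$ and $x_n\to x$ in $Z_0$.

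Consequently, the conjecture reduces to proving the two set inclusions $Z\subseteq Z_0$ and $Z_0\subseteq Z$ for the space $Z$ appearing in Theorem~\ref{thm:Gen}\eqref{thm:Gen:item:nACP2}: given these, the identity maps $Z\to Z_0$ and $Z_0\to Z$ both have closed graph (both spaces embed continuously into $X$), hence are bounded by the closed graph theorem, so $Z=Z_0$ with equivalent norms; in particular the admissible space is unique up to equivalence of norms, and density of $D$ in $Z_0=Z$ is inherited from the standing hypothesis that $D$ is dense in $Z$. Verifying, in addition, that $Z_0$ itself is admissible — so that at least one such $Z$ exists — amounts to checking \textbf{(S1)}--\textbf{(S4)} together with the four bullet conditions for the choice $Z=Z_0$; the non-trivial points there, such as $S(t,s)X\subseteq Z_0$, are of the same regularising nature as the obstacle described below.

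For $Z\subseteq Z_0$, let $(\U(t,s))_{(t,s)\in\Delta}$ be the fundamental solution of \eqref{eqn:nACP} on $\Z=Z\times X$ provided by Theorem~\ref{thm:Gen}\eqref{thm:Gen:item:nACP}, so that $S(t,s)x=\pi_1\U(t,s)\binom{0}{x}$. For $x\in Z$ we have $\binom{0}{x}\in\D$, hence $S(t,s)x\in D$ by \textbf{(U3)}, and, using $\pi_1\A(t)=\pi_2|_{\D}$ together with \textbf{(U4)},
\[
A(t)S(t,s)x=\pi_2\A(t)\U(t,s)\binom{0}{x}=\pi_2\ddd{t}\U(t,s)\binom{0}{x}.
\]
Since $\U(t,s)\D\subseteq\D$, Lemma~\ref{lem:continuity_interpolationspaces}(b) gives $\U(t,s)\in\L(\D)$; using the boundedness of $(\U(t,s))_{(t,s)\in\Delta}$ in $\L(\D)$ and the continuity of $(t,s)\mapsto\A(t)\U(t,s)v$ on $\Delta$ for $v\in\D$ (standard properties of a fundamental solution — the latter says precisely that $\U(\cdot,\cdot)v$ is a classical solution with continuous derivative), one concludes that $(t,s)\mapsto A(t)S(t,s)x$ is continuous with $\sup_{(t,s)\in\Delta}\|A(t)S(t,s)x\|_X\le C\|x\|_Z$. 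Thus $x\in Z_0$, which gives $Z\subseteq Z_0$ and, incidentally, $\|x\|_{Z_0}\le C\|x\|_Z$.

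The reverse inclusion $Z_0\subseteq Z$ is, I expect, the main obstacle: one must show that every $x\in X$ with $S(\cdot,\cdot)x\in\mathrm{C}(\Delta;D)$ already lies in $Z$. The natural strategy is regularisation: for $h\in(0,T]$ put $x_h:=\tfrac1hS(h,0)x$. Then $x_h\in D\subseteq Z$ because $S(h,0)x\in D$, and by \textbf{(S1)}(a),(c),
\[
x_h=\frac1h\int_0^h\ddd{t}S(t,0)x\,\dd t\xrightarrow[h\to0^+]{}\left.\ddd{t}S(t,0)x\right|_{t=0}=x\quad\text{in }X.
\]
It would suffice to upgrade this to convergence in the topology of $Z$ — equivalently, by the previous paragraph, to show $\sup_{(t,s)\in\Delta}\|A(t)S(t,s)(x_h-x)\|_X\to0$ and that $(x_h)$ is Cauchy in $Z$. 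In the autonomous case $A(t)\equiv A$ this succeeds: $\tfrac1hS(h,0)=\tfrac1hR(h)$ (with $R$ the sine family) converges to $\Id$ not merely strongly but uniformly on compact subsets of $X$; the set $\{AS(t,s)x:(t,s)\in\Delta\}$ is compact, being a continuous image of $\Delta$; and $R(h)$ commutes with every $AS(t,s)$ on $Z_0$, so
\[
\sup_{(t,s)\in\Delta}\|AS(t,s)(x_h-x)\|_X=\sup_{(t,s)\in\Delta}\bigl\|\bigl(\tfrac1hR(h)-\Id\bigr)AS(t,s)x\bigr\|_X\xrightarrow[h\to0^+]{}0,
\]
which is exactly the classical proof that the Kisy\'{n}ski space coincides with $\{x:S(\cdot,\cdot)x\in\mathrm{C}(\Delta;D)\}$. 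In the non-autonomous setting one loses both the commutation and the autonomy, and must instead work with an averaged regulariser built from $\U$ (or from $S$ and $\ddd{t}S$), using the evolutionary identity \textbf{(S4)} to relate $S(t,s)S(h,0)$ back to $S$. Making this rigorous appears to require a regularity hypothesis on $t\mapsto A(t)$ stronger than Assumption~\ref{ass:regularity_A} — of Acquistapace--Terreni or Kato--Tanabe type, ensuring in particular that $(t,s)\mapsto\U(t,s)\in\L(\D)$ is continuous and that suitable resolvent estimates are available — and this is presumably why the statement is recorded only as a conjecture. Granting $Z_0\subseteq Z$, the reduction above completes the proof and yields the uniqueness of the space $Z$ in Theorem~\ref{thm:Gen}.
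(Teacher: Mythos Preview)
This statement is labelled a \emph{conjecture} in the paper, and the authors provide no proof; there is therefore nothing to compare your attempt against. You recognise this yourself, and your write-up is best understood as a diagnosis of where the difficulty lies --- the inclusion $Z_0\subseteq Z$ --- together with a correct explanation of why the autonomous argument (commutation of the sine family with $A$, uniform regularisation on compacta) does not transfer. On that diagnostic level the proposal is accurate and in line with what the paper leaves open.

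One caution on the direction you do claim to prove, $Z\subseteq Z_0$. You invoke ``the boundedness of $(\U(t,s))_{(t,s)\in\Delta}$ in $\L(\D)$'' and the joint continuity of $(t,s)\mapsto\A(t)\U(t,s)v$ for $v\in\D$, calling these ``standard properties of a fundamental solution''. Neither follows from Definition~\ref{def:FundSolnACP}: \textbf{(U2)} gives strong continuity of $\U$ only in $\Z$, and Lemma~\ref{lem:continuity_interpolationspaces}(b) yields $\U(t,s)\in\L(\D)$ for each fixed $(t,s)$ but nothing uniform. In fact, for $v=\binom{0}{x}$ with $x\in Z$, one has $\|\U(t,s)v\|_{\D}=\|S(t,s)x\|_D+\|\ddd{t}S(t,s)x\|_Z$, and bounding the first summand uniformly in $(t,s)$ is precisely the statement that $(t,s)\mapsto S(t,s)x$ is continuous into $D$, i.e.\ that $x\in Z_0$ --- so the argument is circular. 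What \emph{is} available is $D\subseteq Z_0$, directly from \textbf{(S2)}; extending this to $Z$ by density would require the uniform estimate $\sup_{(t,s)\in\Delta}\|A(t)S(t,s)x\|_X\le C\|x\|_Z$ for $x\in D$, which is again equivalent to the missing $\L(\D)$-bound. So even the ``easy'' inclusion is not free under the paper's hypotheses, which only reinforces the conjectural status of the statement.
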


\section{A Bounded Perturbation Type Result}
\label{sec:BoundPert}

Let $(B(t))_{t\in\left[0,T\right]}$ be a family of bounded operators on $X$. We first prove a first-order non-autonomous bounded perturbation type result similar to \cite[Chapter VI, Cor.~9.20]{EN} which fits into our framework of fundamental solutions. 

\begin{proposition}\label{prop:BoundPertEvoFamAmend}
Let $(A(t),\dom(A(t)))_{t\in\left[0,T\right]}$ be a family of densely defined closed operators on a Banach space $X$ satisfying Assumption \ref{assump:ConstDom}, Assumption \ref{ass:EquiNorm} and Assumption \ref{ass:regularity_A}. Let $(S(t,s))_{(t,s)\in\Delta}$ be an evolutionary fundamental solution of the corresponding \eqref{eqn:nACP} on $X$ and let $Z$ be the space occuring in Theorem \ref{thm:Gen}
such that
	for all $(t,s)\in\Delta$ we have 
	\begin{itemize}
	\item $S(t,s)X\subseteq Z$, $S(t,s)Z\subseteq D$, $(t,s)\mapsto S(t,s)x\in Z$ is continuous for all $x\in X$, 
	\item $\ddd{t}S(t,s)Z\subseteq Z$, $\frac{\partial^2}{\partial t^2}S(t,s)x$ exists for all $x\in Z$ and $\frac{\partial^2}{\partial t^2}S(t,s)x = A(t)S(t,s)x$, 
	\item $\ddd{s} S(t,s)x$ exists for all $x\in Z$, $\ddd{s} S(t,s)Z\subseteq Z$ and $(t,s)\mapsto \ddd{s}S(t,s)x\in Z$ is continuous for all $x\in Z$, 
	\item $\ddd{t}\ddd{s}S(t,s)D\subseteq Z$, $\ddd{t}\ddd{s} S(t,s)x$ exists for all $x\in Z$ and there exists $C\geq 0$ such that $\|\ddd{t}\ddd{s} S(t,s)x\|_X\leq C\|x\|_{Z}$ for all $x\in Z$ and $(t,s)\in\Delta$.
	\end{itemize}
Let $(\U(t,s))_{(t,s)\in\Delta}$ be a fundamental solution of \eqref{eqn:nACP} on $\mathcal{Z}:=Z\times X$ corresponding to $(\A(t),\dom(\A(t)))_{t\in\left[0,T\right]}$. Let $B(\cdot)\in\mathrm{C}\left(\left[0,T\right];\LLL_\mathrm{s}(Z)\right)$. Then there exists a fundamental solution $(\V(t,s))_{(t,s)\in\Delta}$ of \eqref{eqn:nACP} on $\mathcal{Z}$ corresponding to the family of operators $(\A(t)+\B(t),\dom(\A(t)))_{t\in\left[0,T\right]}$, where
\[
\B(t):=\begin{pmatrix}
0&0\\
B(t)&0
\end{pmatrix},\quad t\in\left[0,T\right].
\]
\end{proposition}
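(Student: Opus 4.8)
The plan is to construct $(\V(t,s))_{(t,s)\in\Delta}$ as the Dyson--Phillips series of $(\U(t,s))_{(t,s)\in\Delta}$ perturbed by $\B(\cdot)$ on $\Z=Z\times X$, and then to verify the axioms \textbf{(U1)}--\textbf{(U4)} of Definition \ref{def:FundSolnACP} one by one. The observation that makes everything run is that, thanks to its block shape, $\B(t)$ is a strongly continuous family of \emph{bounded} operators on $\Z$ which in addition maps $\Z$ into $\{0\}\times Z\subseteq\D$.

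First I would record that $\B(\cdot)\in\mathrm{C}([0,T];\LLL_\mathrm{s}(\Z))$: for $\binom{x}{y}\in\Z$ one has $x\in Z$, hence $B(t)x\in Z$, and since the inclusion $Z\subseteq X$ is continuous, $\B(t)\binom{x}{y}=\binom{0}{B(t)x}\in\Z$ with $\|\B(t)\|_{\L(\Z)}\le c\,\|B(t)\|_{\L(Z)}$; strong continuity of $t\mapsto\B(t)$ on $\Z$ is immediate from $B(\cdot)\in\mathrm{C}([0,T];\LLL_\mathrm{s}(Z))$, and boundedness of $\B(t)$ gives $\dom(\A(t)+\B(t))=\D$. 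Put $b:=\sup_{t\in[0,T]}\|\B(t)\|_{\L(\Z)}$ and $M:=\sup_{(t,s)\in\Delta}\|\U(t,s)\|_{\L(\Z)}<\infty$. I would then set $\V_0(t,s):=\U(t,s)$ and $\V_{n+1}(t,s):=\int_s^t\U(t,r)\B(r)\V_n(r,s)\,\dd r$, obtain by induction $\|\V_n(t,s)\|_{\L(\Z)}\le M^{n+1}b^n(t-s)^n/n!$, and define $\V(t,s):=\sum_{n\ge0}\V_n(t,s)$. This series converges in $\L(\Z)$ uniformly on $\Delta$; each $\V_n$ is strongly continuous on $\Delta$, hence so is $\V$, which is \textbf{(U2)}; one has $\V(s,s)=\Id$; and $\V$ satisfies
\[\V(t,s)=\U(t,s)+\int_s^t\U(t,r)\B(r)\V(r,s)\,\dd r\qquad((t,s)\in\Delta).\]

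For \textbf{(U3)} and \textbf{(U4)} I would fix $w=\binom{x}{y}\in\D$ and $s\in[0,T]$, and set $v(t):=\V(t,s)w$, $a(t):=\pi_1 v(t)$, where $\pi_1\colon\Z\to Z$ is the first projection. Since the series converges uniformly in $\L(\Z)$ and $\pi_1\in\L(\Z,Z)$, we get $a\in\mathrm{C}([s,T];Z)$; consequently $r\mapsto\B(r)v(r)=\binom{0}{B(r)a(r)}$ lies in $\mathrm{C}([s,T];\D)$ (because $B(r)a(r)\in Z$ and $r\mapsto B(r)a(r)$ is $Z$-continuous, using $B(\cdot)\in\mathrm{C}([0,T];\LLL_\mathrm{s}(Z))$ and $a\in\mathrm{C}([s,T];Z)$), and $r\mapsto\A(r)\B(r)v(r)=\binom{B(r)a(r)}{0}$ lies in $\mathrm{C}([s,T];\Z)$. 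With this regularity, the standard theory of inhomogeneous non-autonomous problems with constant domain (cf.\ \cite[Ch.~VI]{EN}) yields that $t\mapsto\int_s^t\U(t,r)\B(r)v(r)\,\dd r$ is continuously differentiable with derivative $\B(t)v(t)+\int_s^t\A(t)\U(t,r)\B(r)v(r)\,\dd r$; together with $\frac{\partial}{\partial t}\U(t,s)w=\A(t)\U(t,s)w$ from \textbf{(U4)} for $\U$ and the variation-of-constants equation this gives $\frac{\partial}{\partial t}v(t)=(\A(t)+\B(t))v(t)$, and in particular $v(t)\in\D$, which is \textbf{(U3)}. The second identity in \eqref{eqn:DiffEvoFam} is obtained in the same way by differentiating the integral equation in $s$ and invoking \textbf{(U4)} for $\U$. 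Finally, \textbf{(U1)} follows from uniqueness of classical solutions of the perturbed problem: for $x\in\D$ and $r\le s\le t$, both $t\mapsto\V(t,s)\V(s,r)x$ and $t\mapsto\V(t,r)x$ solve $\dot u=(\A+\B)u$ on $[s,T]$ with value $\V(s,r)x\in\D$ at $t=s$, hence coincide, and density of $\D$ in $\Z$ together with boundedness of the operators extends $\V(t,s)\V(s,r)=\V(t,r)$ to all of $\Z$.

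The main obstacle is the regularity step in the previous paragraph: it is false for an arbitrary strongly continuous family of bounded operators on $\Z$, and one must use the special block shape of $\B(t)$ — that $\B(t)\Z\subseteq\{0\}\times Z\subseteq\D$ and that the first component $a(\cdot)$ of $v(\cdot)$ is continuous into $Z$ — to upgrade the mild solution to a classical one; this is precisely the point where the hypothesis $B(\cdot)\in\mathrm{C}([0,T];\LLL_\mathrm{s}(Z))$ (rather than on $X$) is needed. Everything else — the norm estimates and convergence of the series, strong continuity, and the cocycle identity via uniqueness — is routine, and alternatively one could obtain $(\V(t,s))_{(t,s)\in\Delta}$ from the bounded perturbation result \cite[Ch.~VI, Cor.~9.20]{EN} on $\Z$ and then reinterpret it in the language of fundamental solutions.
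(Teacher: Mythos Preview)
Your proposal is correct and rests on the same key observation as the paper: because $\B(t)\Z\subseteq\{0\}\times Z\subseteq\D$ and $B(\cdot)\in\mathrm{C}([0,T];\LLL_\mathrm{s}(Z))$, the perturbation is not merely bounded on $\Z$ but takes values continuously in $\D$, which is exactly what is needed to upgrade the mild solution to a classical one and to keep $\D$ invariant.

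The paper, however, takes precisely the shortcut you mention in your final sentence. It invokes \cite[Ch.~VI, Cor.~9.20]{EN} to obtain at once a family $(\V(t,s))_{(t,s)\in\Delta}$ satisfying \textbf{(U1)}, \textbf{(U2)} and \emph{both} variation-of-constants formulas
\[
\V(t,s)=\U(t,s)+\int_s^t\U(t,r)\B(r)\V(r,s)\,\dd r=\U(t,s)+\int_s^t\V(t,r)\B(r)\U(r,s)\,\dd r,
\]
and then verifies \textbf{(U3)} and \textbf{(U4)} directly from these. For \textbf{(U3)} the paper simply reads off that the integrand $\U(t,r)\B(r)\V(r,s)\binom{x}{y}$ lies in $\D$ (since $\B(r)$ lands in $\{0\}\times Z\subseteq\D$ and $\U(t,r)\D\subseteq\D$), so the integral is in $\D$; this is more direct than obtaining $\V(t,s)w\in\D$ as a by-product of differentiability. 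For \textbf{(U4)} the paper differentiates the first formula in $t$ (pulling $\A(t)$ through the integral via Hille's theorem) and the second formula in $s$; your remark ``in the same way by differentiating the integral equation in $s$'' is the one place that is a bit thin, since with only the first formula the $s$-derivative produces an integral equation for $\partial_s\V(t,s)w$ rather than the desired identity --- having the second formula available removes this circularity. Your explicit Dyson--Phillips construction and the uniqueness argument for the cocycle identity are fine, just longer than necessary once one is willing to quote \cite[Ch.~VI, Cor.~9.20]{EN}.
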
                                                                             

\begin{proof}
Firstly, by Theorem \ref{thm:Gen} we know that $(\U(t,s))_{(t,s)\in\Delta}$ is indeed a fundamental solution of \eqref{eqn:nACP} on $\Z=Z\times X$. By \cite[Chapter VI, Cor.~9.20]{EN} there exists a family of operators $(\V(t,s))_{(t,s)\in\Delta}$ satisfying \textbf{(U1)} and \textbf{(U2)}. Moreover, we know that the variation of constants formula holds, i.e., one has that
\begin{align}\label{eqn:VarConstBoundPert}
\begin{split}
\V(t,s)\binom{z}{x}&=\U(t,s)\binom{z}{x}+\int_s^t{\U(t,r)\B(r)\V(r,s)\binom{z}{x}\ \dd{r}}\\
&=\U(t,s)\binom{z}{x}+\int_s^t{\V(t,r)\B(r)\U(r,s)\binom{z}{x}\ \dd{r}},
\end{split}
\end{align}
for all $z\in Z$ and $x\in X$. In order to show that $(\V(t,s))_{(t,s)\in\Delta}$ is a fundamental solution according to Definition \ref{def:FundSolnACP} we have to show that also \textbf{(U3)} and \textbf{(U4)} hold. To do so, let $\binom{x}{y}\in\mathcal{D}:=D\times Z$ be arbitrary. Then $\U(t,s)\binom{x}{y}\in\mathcal{D}$ by the assumption that $(\U(t,s))_{(t,s)\in\Delta}$ is a fundamental solution, cf. Definition \ref{def:FundSolnACP}. As $(\V(t,s))_{(t,s)\in\Delta}$ is a family of operators in $Z\times X$ we obviously have $\V(t,s)(D\times X)\subseteq Z\times X$. Now, we observe that by the assumption that $B(\cdot)\in\mathrm{C}\left(\left[0,T\right];\LLL_\mathrm{s}(Z)\right)$ and the explicit representation of the operators we have $\B(t)(Z\times X)\subseteq\left\{0\right\}\times Z\subseteq D\times Z$ so that the integral term appearing in the variation of constant formula \eqref{eqn:VarConstBoundPert} is in $D\times Z$ as well. This shows that \textbf{(U3)} holds. For \textbf{(U4)}, let $\binom{x}{y}\in\mathcal{D}$. Then we have
\begin{align*}
\frac{\partial}{\partial{t}}\V(t,s)\binom{x}{y}&=\frac{\partial}{\partial{t}}\U(t,s)\binom{x}{y}+\frac{\partial}{\partial{t}}\int_s^t{\U(t,r)\B(r)\V(r,s)\binom{x}{y}\ \dd{r}}\\
&=\A(t)\U(t,s)\binom{x}{y}+\U(t,t)\B(t)\V(t,s)\binom{x}{y}+\int_s^t{\frac{\partial}{\partial{t}}\U(t,r)\B(r)\V(r,s)\binom{x}{y}\ \dd{r}}\\
&=\A(t)\U(t,s)\binom{x}{y}+\B(t)\V(t,s)\binom{x}{y}+\int_s^t{\A(t)\U(t,r)\B(r)\V(r,s)\binom{x}{y}\ \dd{r}}\\
&=\A(t)\U(t,s)\binom{x}{y}+\B(t)\V(t,s)\binom{x}{y}+\A(t)\int_s^t{\U(t,r)\B(r)\V(r,s)\binom{x}{y}\ \dd{r}}\\
&=(\A(t)+\B(t))\V(t,s)\binom{x}{y},
\end{align*}
where we have used Hille's theorem. Moreover,
\begin{align*}
\frac{\partial}{\partial{s}}\V(t,s)\binom{x}{y}&=\frac{\partial}{\partial{s}}\U(t,s)\binom{x}{y}+\frac{\partial}{\partial{s}}\int_s^t{\V(t,r)\B(r)\U(r,s)\binom{x}{y}\ \dd{r}}\\
&=-\U(t,s)\A(s)\binom{x}{y}-\V(t,s)\B(s)\U(s,s)\binom{x}{y}+\int_s^t{\frac{\partial}{\partial{s}}\V(t,r)\B(r)\U(r,s)\binom{x}{y}\ \dd{r}}\\
&=-\U(t,s)\A(s)\binom{x}{y}-\V(t,s)\B(s)\binom{x}{y}+\int_s^t{\V(t,r)\B(r)\frac{\partial}{\partial{t}}\U(r,s)\binom{x}{y}\ \dd{r}}\\
&=-\U(t,s)\A(s)\binom{x}{y}-\V(t,s)\B(s)\binom{x}{y}-\int_s^t{\V(t,r)\B(r)\U(r,s)\A(s)\binom{x}{y}\ \dd{r}}\\
&=-\V(t,s)(\A(s)+\B(s))\binom{x}{y}.
\end{align*}
Hence, we conclude that there exists a fundamental solution of \eqref{eqn:nACP} corresponding to the family of operators $(\A(t)+\B(t),\dom(\A(t)))_{t\in\left[0,T\right]}$ according to Definition \ref{def:FundSolnACP}.
\end{proof}

Now, by using Theorem \ref{thm:Gen} in combination with Proposition \ref{prop:BoundPertEvoFamAmend} we obtain the following result.

\begin{theorem}
\label{thm:bounded_perturbation}
Let $(A(t),\dom(A(t)))_{t\in\left[0,T\right]}$ be a family of densely defined closed operators on a Banach space $X$ satisfying Assumption \ref{assump:ConstDom}, Assumption \ref{ass:EquiNorm} and Assumption \ref{ass:regularity_A}.
Let $(S(t,s))_{(t,s)\in\Delta}$ be an evolutionary fundamental solution of \eqref{eqn:nACP2} 
such that
	for all $(t,s)\in\Delta$ we have 
	\begin{itemize}
	\item $S(t,s)X\subseteq Z$, $S(t,s)Z\subseteq D$, $(t,s)\mapsto S(t,s)x\in Z$ is continuous for all $x\in X$, 
	\item $\ddd{t}S(t,s)Z\subseteq Z$, $\frac{\partial^2}{\partial t^2}S(t,s)x$ exists for all $x\in Z$ and $\frac{\partial^2}{\partial t^2}S(t,s)x = A(t)S(t,s)x$, 
	\item $\ddd{s} S(t,s)x$ exists for all $x\in Z$, $\ddd{s} S(t,s)Z\subseteq Z$ and $(t,s)\mapsto \ddd{s}S(t,s)x\in Z$ is continuous for all $x\in Z$, 
	\item $\ddd{t}\ddd{s}S(t,s)D\subseteq Z$, $\ddd{t}\ddd{s} S(t,s)x$ exists for all $x\in Z$ and there exists $C\geq 0$ such that $\|\ddd{t}\ddd{s} S(t,s)x\|_X\leq C\|x\|_{Z}$ for all $x\in Z$ and $(t,s)\in\Delta$.
	\end{itemize}
Let $B(\cdot)\in\mathrm{C}\left(\left[0,T\right];\LLL_\mathrm{s}(X)\right)\cap\mathrm{C}\left(\left[0,T\right];\LLL_\mathrm{s}(Z)\right)$. Then there exists an evolutionary fundamental solution of \eqref{eqn:nACP2} on $X$ associated to a family of operator $(A(t)+B(t),\dom(A(t)))_{t\in\left[0,T\right]}$.
\end{theorem}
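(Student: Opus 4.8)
The plan is to run the first--order reduction of Theorem~\ref{thm:Gen} in both directions and to carry out the perturbation only in the product space, where Proposition~\ref{prop:BoundPertEvoFamAmend} does the actual work. First I would apply Theorem~\ref{thm:Gen}, in the direction \eqref{thm:Gen:item:nACP2}$\Rightarrow$\eqref{thm:Gen:item:nACP}, to the family $(A(t),D)_{t\in[0,T]}$, the given evolutionary fundamental solution $(S(t,s))_{(t,s)\in\Delta}$ and the given space $Z$; this produces a fundamental solution $(\U(t,s))_{(t,s)\in\Delta}$ of \eqref{eqn:nACP} on $\Z=Z\times X$ associated with $(\A(t),\D)_{t\in[0,T]}$. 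Since $B(\cdot)\in\mathrm{C}([0,T];\LLL_{\mathrm{s}}(Z))$, Proposition~\ref{prop:BoundPertEvoFamAmend} then yields a fundamental solution $(\V(t,s))_{(t,s)\in\Delta}$ of \eqref{eqn:nACP} on $\Z$ associated with $(\A(t)+\B(t),\D)_{t\in[0,T]}$, where $\B(t):=\begin{pmatrix}0&0\\ B(t)&0\end{pmatrix}$.

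The key point is that
\[
\A(t)+\B(t)=\begin{pmatrix}0&\Id\\ A(t)+B(t)&0\end{pmatrix},\qquad \dom(\A(t)+\B(t))=D\times Z,
\]
so $(\V(t,s))_{(t,s)\in\Delta}$ is exactly a fundamental solution of the first--order reduction of \eqref{eqn:nACP2} for the family $(A(t)+B(t),D)_{t\in[0,T]}$. I would then apply Theorem~\ref{thm:Gen} once more, now in the direction \eqref{thm:Gen:item:nACP}$\Rightarrow$\eqref{thm:Gen:item:nACP2}, to this perturbed family, the same space $Z$ and $(\V(t,s))_{(t,s)\in\Delta}$, obtaining an evolutionary fundamental solution $(\widetilde S(t,s))_{(t,s)\in\Delta}$ of \eqref{eqn:nACP2} on $X$ associated with $(A(t)+B(t),D)_{t\in[0,T]}$, explicitly $\widetilde S(t,s)x=\pi_1\V(t,s)\binom{0}{x}$; by construction it even carries all the $Z$-regularity properties listed in the statement, which is more than the conclusion requires.

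It remains to verify that $(A(t)+B(t),D)_{t\in[0,T]}$ still satisfies the standing hypotheses needed to invoke Theorem~\ref{thm:Gen}. Each $A(t)+B(t)$ is a bounded perturbation of a densely defined closed operator, hence closed with the same domain $D$, so Assumption~\ref{assump:ConstDom} holds. Applying the uniform boundedness principle to $B(\cdot)\in\mathrm{C}([0,T];\LLL_{\mathrm{s}}(X))$ gives $M:=\sup_{t\in[0,T]}\|B(t)\|_{\L(X)}<\infty$, and a short estimate then shows that the graph norm of $A(t)+B(t)$ is comparable, uniformly in $t$, to the graph norm of $A(t)$; combined with Assumption~\ref{ass:EquiNorm} for $(A(t))$ this gives Assumption~\ref{ass:EquiNorm} for $(A(t)+B(t))$, and $D$ equipped with the graph norm of $A(0)+B(0)$ is the same Banach space. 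Moreover $t\mapsto(A(t)+B(t))x$ is continuous for each $x\in D$, being the sum of $A(\cdot)x$ (continuous by Assumption~\ref{ass:regularity_A}) and $B(\cdot)x$ (continuous by strong continuity on $X$).

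The subtlety I expect to be the main obstacle is that Theorem~\ref{thm:Gen} is stated under Assumption~\ref{ass:regularity_A}, i.e.\ $A(\cdot)x\in\mathrm{C}^1([0,T];X)$ for $x\in D$, whereas $B(\cdot)$ is only assumed strongly continuous, so $(A(\cdot)+B(\cdot))x$ need not be $\mathrm{C}^1$. The resolution is that the implication \eqref{thm:Gen:item:nACP}$\Rightarrow$\eqref{thm:Gen:item:nACP2}, which is the only part of Theorem~\ref{thm:Gen} that we apply to the perturbed family, invokes Assumption~\ref{ass:regularity_A} solely through the strong continuity of $t\mapsto A(t)x$ on $D$ (in the verification of \textbf{(S3)}(c)); since that strong continuity does hold for $A(\cdot)+B(\cdot)$, the argument carries over verbatim. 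Apart from this one has only to keep track of where each piece of regularity of $B$ is used --- strong continuity on $Z$ for Proposition~\ref{prop:BoundPertEvoFamAmend}, strong continuity on $X$ for the graph-norm comparison and for the continuity of $t\mapsto(A(t)+B(t))x$ --- but these are precisely the hypotheses imposed, so no further work is needed.
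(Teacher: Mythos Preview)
Your proposal is correct and follows essentially the same three-step strategy as the paper's proof: go from $(S(t,s))$ to $(\U(t,s))$ via Theorem~\ref{thm:Gen}, perturb in the product space via Proposition~\ref{prop:BoundPertEvoFamAmend}, and return via Theorem~\ref{thm:Gen} again. You are in fact more careful than the paper on one point: the paper's proof simply says ``By using Theorem~\ref{thm:Gen} again we conclude the result'' without checking that the perturbed family $(A(t)+B(t),D)$ satisfies the standing hypotheses of Section~\ref{sec:generation}, and in particular without addressing the $\mathrm{C}^1$ issue in Assumption~\ref{ass:regularity_A} that you identify and resolve correctly.
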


\begin{proof}
By the assumptions on the evolutionary fundamental solution $(S(t,s))_{(t,s)\in\Delta}$ and Theorem \ref{thm:Gen}, there exists a fundamental solution $(\mathcal{U}(t,s))_{(t,s)\in\Delta}$ of \eqref{eqn:nACP} on $\mathcal{Z}:=Z\times X$. For $t\in\left[0,T\right]$ we define a family of operators on $\Z=Z\times X$ by
\begin{align*}
\mathcal{B}(t):=\begin{pmatrix}
0&0\\
B(t)&0
\end{pmatrix}.
\end{align*}
By Proposition \ref{prop:BoundPertEvoFamAmend} we obtain a fundamental solution $(\V(t,s))_{(t,s)\in\Delta}$ of \eqref{eqn:nACP} on $\mathcal{Z}$ corresponding to the family of operators $(\A(t)+\B(t),\dom(\A(t)))_{t\in\left[0,T\right]}$. By using Theorem \ref{thm:Gen} again we conclude the result.
\end{proof}

\begin{remark}
We observe that we do not need any additional assumption on $(S(t,s))_{(t,s)\in\Delta}$ for Theorem \ref{thm:bounded_perturbation} besides the ones already appearing in Theorem \ref{thm:Gen}. Hence, perturbation just relies on the continuity assumption on the perturbing operators.
\end{remark}

\section{Example: Non-Autonomous Wave Equation}
\label{sec:example}

Motivated by \cite[Sec.~5]{HP2016}, we consider the following perturbed non-autonomous wave equation on $\Ell^2\left(0,\pi\right)$ given by
\begin{align}\label{eqn:Appl}
\begin{cases}
\frac{\partial^2}{\partial t^2}w(t,\xi)=\alpha(t)\frac{\partial^2}{\partial\xi^2}w(t,\xi)+\beta(t,\xi)w(t,\xi),&\quad t\in\left(0,T\right],\ \xi\in\left(0,\pi\right),\\
w(t,0)=w(t,\pi)=0,&\quad t\in\left(0,T\right],\\
w(0,\xi)=\varphi(\xi),&\\
\frac{\partial}{\partial t}w(0,\xi)=\psi(\xi),&\quad \xi\in\left[0,\pi\right],
\end{cases}
\end{align}
where $T>0$, $\varphi,\psi\in\Ell^2\left(0,\pi\right)$ and $\alpha\from\left[0,T\right]\to\RR$ is continuously differentiable such that $\alpha(t)\geq1$ for all $t\in\left[0,T\right]$. Moreover, we assume that $\beta\in\mathrm{C}^2([0,T]\times\left[0,\pi\right])$. For $t\in\left[0,T\right]$, we define a family of operators $(A(t),\dom(A(t)))_{t\in\left[0,T\right]}$ on $X:=\Ell^2\left(0,\pi\right)$ by $A(t)=\alpha(t)A_0$ with dense domain $\dom(A(t))=\dom(A_0)=:D$, $t\in\left[0,T\right]$, where
\[
A_0f:=f'',\quad \dom(A_0):=\left\{f\in\mathrm{H}^2\left(0,\pi\right):\ f(0)=f(\pi)=0\right\}
\]
is the Dirichlet Laplacian. In particular, Assumption \ref{assump:ConstDom} is satisfied. 
Since $\alpha$ is continuous, there exists $C>0$ such that $\alpha(t)\leq C$ for all $t\in [0,T]$, and therefore by the assumption $\alpha\geq 1$, we observe
\[\| \cdot\|_{A_0} \leq \|\cdot \|_{A(t)} \leq C\|\cdot\|_{A_0}\]
for all $t\in[0,T]$, which implies that Assumption \ref{ass:EquiNorm} holds.
Moreover, as $\alpha$ is continuously differentiable, Assumption \ref{ass:regularity_A} is satisfied as well.

\medskip
Furthermore, we introduce a family of bounded operators $(B(t))_{t\in\left[0,T\right]}$ by
\[
B(t)f:=\beta(t,\cdot)f,\quad t\in\left[0,T\right]
\] 
Then \eqref{eqn:Appl} has an abstract form of a non-autonomous second-order abstract Cauchy problem
\begin{equation}\label{eq:application}
\begin{cases}
\ddot{u}(t)=(A(t)+B(t))u(t),&\quad t\in\left(0,T\right],\\
u(0)=\varphi,&\\
\dot{u}(0)=\psi.&
\end{cases}
\end{equation}
As elaborated by Henr\'{\i}quez and Pozo in \cite{HP2016}, for $n\in\NN$ let $z_n\from[0,\pi]\to\RR$, $z_n(\xi):= \sqrt{\frac{2}{\pi}}\sin(n\xi)$. Then $(z_n)_{n\in\NN}$ is an orthonormal basis of $\Ell^2(0,\pi)$ of eigenfunctions of $A_0$ corresponding to the sequence of eigenvalues $(-n^2)_{n\in\NN}$ and the family of bounded linear operators $(S(t,s))_{(t,s)\in \Delta}$ on $\Ell^2\left(0,\pi\right)$ defined by 
\[
S(t,s)x:=\sum_{n=1}^\infty{r_n(t,s)\left\langle x,z_n\right\rangle z_n},
\]
provides a fundamental solution to the second-order non-autonomous abstract Cauchy problem associated to $(A(t),\dom(A(t)))_{t\in\left[0,T\right]}$, where the functions $r_n$ denote the solution of the initial value problem
\begin{align}\label{eqn:DiffEqFundSolEx}
\begin{cases}
r''(t)+n^2\alpha(t)r(t)=0,&\quad 0\leq s\leq t\leq T,\\
r(s)=0,&\\
r'(s)=1.&
\end{cases}
\end{align}
Note that we have $|r_n(t,s)|\leq \frac{1}{\sqrt{\alpha(s)} n} \leq \frac{1}{n}$, $|\ddd{t} r_n(t,s)|\leq 1$, $|\ddd{s} r_n(t,s)|\leq 1$ and $|\ddd{t}\ddd{s}r_n(t,s)|\leq n$ for all $(t,s)\in\Delta$ and $n\in\NN$; cf. \cite[(5.12)]{HP2016} as well as \cite{NoyoObayaRojo1995}. By spectral theory,
\[D = \{x\in \Ell^2(0,\pi):\; \sum_{n=1}^\infty n^4 \langle x,z_n\rangle ^2 <\infty\}.\]
Let 
\[
Z:=\mathrm{H}_0^1\left(0,\pi\right) = \left\{f\in\mathrm{H}^1\left(0,\pi\right):\ f(0)=f(\pi)=0\right\} = \{x\in \Ell^2(0,\pi):\ \sum_{n=1}^\infty n^2 \langle x,z_n\rangle ^2 <\infty\}.\]
The estimates on the $r_n$ imply that the hypotheses on $(S(t,s))_{(t,s)\in\Delta}$ in Theorem \ref{thm:bounded_perturbation} are satisfied, due to uniform convergence of the series representations. Note that these hypotheses are the same as those in Theorem \ref{thm:Gen}(\ref{thm:Gen:item:nACP2}). 

\medskip

Since $\beta\in \mathrm{C}^2([0,T]\times [0,\pi])$ we easily obtain $B(\cdot)\in\mathrm{C}([0,T];\LLL_{\mathrm{s}}(X))\cap \mathrm{C}([0,T];\LLL_{\mathrm{s}}(Z))$. Thus, Theorem \ref{thm:bounded_perturbation} yields a fundamental solution to \eqref{eq:application}.

\section*{Acknowledgment}
This work is based on the research supported by the National Research Foundation (Grant number: 150417). It is acknowledged that opinions, findings and conclusions or recommendations expressed in any publication generated by this supported research is that of the author(s). The National Research Foundation accepts no liability whatsoever in this regard.

C.S. thanks for a very pleasent stay at the University of the Free State, South Africa, where this work was done.

\end{document}